\newtheorem{theorem}{Theorem}
\newtheorem{remark}[theorem]{Remark}
\def\R{\mathbb{R}}
\newenvironment{proof}{\paragraph{Proof:}}{\hfill$\square$}
\begin{document}

\runninghead{D.~Rocha, C.~J.~Silva, D.~F.~M.~Torres}
	
\title{Stability and Optimal Control of a Delayed HIV Model}

\author{Diana Rocha\affil{a}, 
Cristiana J. Silva\affil{a} 
and Delfim F. M. Torres\affil{a}\corrauth}

\address{\affilnum{a}Center for Research and Development in Mathematics and Applications (CIDMA),
Department of Mathematics, University of Aveiro, 3810-193 Aveiro, Portugal}

\corraddr{Department of Mathematics,
University of Aveiro,
3810-193 Aveiro, Portugal.
Email: delfim@ua.pt}

% ---------------------------------------------

\begin{abstract}
We propose and investigate a delayed model that
studies the relationship between HIV and the immune system 
during the natural course of infection and in the context 
of antiviral treatment regimes. Sufficient criteria 
for local asymptotic stability of the infected 
and viral free equilibria are given.
An optimal control problem with time delays both 
in state variables (incubation delay) and control 
(pharmacological delay) is then formulated and analyzed, 
where the objective consists to find the optimal treatment 
strategy that maximizes the number of uninfected $CD4^{+}$ 
T cells as well as CTL immune response cells, keeping 
the drug therapy as low as possible.
\end{abstract}

\MOS{34C60; 49K15; 92D30}

\keywords{HIV modelling; incubation and pharmacological time delays; stability; optimal control.}

\maketitle

\vspace{-6pt}

% ----------------------
	
\section{Introduction}

The study of mathematical models for human immunodeficiency 
virus (HIV) infection is a subject of strong current interest, 
both at population and cell levels (see, e.g., 
\cite{Li,MR3392642,WodarzNowak} and references cited therein).
Based on the model of \cite{CulshawRuanSpiteri2004},
in this work we analyze a mathematical model that studies the relationship 
between HIV and the immune system during the natural course of infection
and in the context of antiviral treatment regimes.
The model considers three variables: uninfected $CD4^{+}$ T cells,
denoted by $x$; infected $CD4^{+}$ T cells, denoted by $y$; 
and CTL effectors (immune response cells), denoted by $z$.
According to \cite{Arnaout:2000}, the viral load is assumed 
to be proportional to the level of infected cells.  
Uninfected $CD4^{+}$ T cells are produced at a rate $\lambda$, 
die at a rate $d$, and become infected at a rate $\beta$. 
Infected cells decay at a rate $a$ and are killed by CTL effectors 
at a rate $p$. Proliferation of the CTL population is given 
by $cxyz$ and is proportional to both virus load ($y$) 
and the number of uninfected $CD4^{+}$ T cells ($x$). 
CTL effectors die at a rate $h$. Mathematically, the
model \cite{CulshawRuanSpiteri2004} is described by
\begin{equation}
\label{eq:model}
\begin{cases}
\dot{x}={\lambda}-dx-{\beta}xy, \\
\dot{y}={\beta}xy-ay-pyz, \\
\dot{z}=cxyz-hz.
\end{cases}
\end{equation}

Time delays play an important role in the dynamics of HIV infection: 
see, e.g., \cite{Mittler:HIVdelay:Math:Bio:1998,NelsonMurray:delay:HIV:2000} 
and references therein. For this reason, in this work we introduce a discrete
time-delay into the model \eqref{eq:model}, which represents 
the incubation period, that is, the time between the new infection 
of a $CD4^{+}$ T cell and the time it becomes infectious (Section~\ref{sec:2}).
We prove local asymptotic stability of the viral free and infected equilibriums, 
for any time delay (Section~\ref{sec:3}). 

Optimal control theory has been applied with success to 
epidemiology HIV models: see, e.g., 
\cite{CulshawRuanSpiteri2004,Haffat:Yousfi:OCHIVdelay:2012} 
and references therein. However, epidemiology optimal control problems
with delays in both state and control variables are a rarity.
For one such optimal control problem, of a tuberculosis model, 
we refer the reader to the recent work of
Silva, Maurer and Torres \cite{MyID:353}.
Here, we propose and solve 
a HIV optimal control problem, 
with delays in both state and control variables,
where the objective is to find 
the optimal treatment strategy that maximizes the number 
of $CD4^{+}$ T cells, as well as CTL immune response cells, 
keeping the drug/chemotherapy strength, which depends
on a pharmacological delay, as low as possible
(Section~\ref{sec:OC}). The stability and optimal control
results of Sections~\ref{sec:3} and \ref{sec:OC}
are then illustrated through numerical simulations 
in Section~\ref{sec:numerical}. We end with 
Section~\ref{sec:conc:fw} of conclusions and future work.

% ----------------------

\section{Model with time delay $\tau$}
\label{sec:2}

In epidemiological literature, 
a latent or incubation period is often modeled by
incorporating it as a delay effect \cite{Kaddar}. 
We consider the following delayed model,  
where $\tau> 0$ represents the incubation period:
\begin{equation}
\label{model:delay}
\begin{cases}
\dot{x}(t)={\lambda}-dx(t)-{\beta}x(t)y(t),\\
\dot{y}(t)={\beta}x(t-\tau)y(t-\tau)-ay(t)-py(t)z(t),\\
\dot{z}(t)=cx(t)y(t)z(t)-hz(t).
\end{cases}
\end{equation}
The initial conditions for system \eqref{model:delay} are
\begin{equation}
\label{eq:init:cond:moddelay}
x(\theta) = \varphi_1(\theta), 
\quad y(\theta) = \varphi_2(\theta), 
\quad z(\theta) = \varphi_3(\theta), 
\end{equation}
$-\tau \leq \theta \leq 0$, where 
$\varphi=\left(\varphi_1, \varphi_2, \varphi_3 \right)^T \in C$ 
with $C$ the Banach space $C \left([-\tau, 0], {\mathbb{R}}^3 \right)$ 
of continuous functions mapping the interval $[-\tau, 0]$ into ${\mathbb{R}}^3$.
The usual local existence, uniqueness and continuation results apply
\cite{Hale_Lunel_book1993,YKuang_1993}. Therefore, there exists 
a unique solution $\left( x(t), y(t), z(t)\right)$ of \eqref{model:delay}
with initial conditions \eqref{eq:init:cond:moddelay}, 
for all time $t \geq 0$. From biological meaning, we further assume 
the initial functions to be non-negative, that is,
\begin{equation}
\label{eq:non:neg}
\varphi(\theta) \geq 0 \quad \text{for}  
\quad \theta \in [-\tau, 0], \quad i = 1, \ldots, 3. 
\end{equation}
From \cite[Theorem 2.1]{ZhuZou:DelayHIV:DCDS:2009}, it follows that all solutions 
of \eqref{model:delay} satisfying \eqref{eq:init:cond:moddelay} with \eqref{eq:non:neg}
are bounded for all time $t \geq 0$, which ensures not only local existence 
but also the existence of a solution for all time $t \geq 0$.

It is easy to see that system \eqref{model:delay} 
has an infection-free equilibrium 
\begin{equation}
\label{eq:E0}
E_0=\left(\frac{\lambda}{d},0,0\right), 
\end{equation}
which is the only biologically meaningful 
equilibrium if $\beta < \frac{d a}{\lambda}$. 
Let $\beta > \frac{d a}{\lambda}$ and assume that $\lambda c - \beta h > 0$. 
If $\beta < \frac{a c d}{\lambda c - \beta h}$, then system \eqref{model:delay} 
has a unique infected equilibrium $E_1$ given by
\begin{equation}
\label{eq:E1}
E_1=\Biggl( \frac{a}{\beta}, \frac{\lambda \beta - d a}{\beta a}, 0 \Biggr).
\end{equation}
Assume that $\lambda c - \beta h > 0$. Whenever $\beta > \frac{a c d}{\lambda c - \beta h}$, 
the unique infected equilibrium of system \eqref{model:delay} is given by
the CTL equilibrium 
\begin{equation}
\label{eq:E2}
E_2=\Biggl(  \frac{\lambda c - \beta h}{c d}, 
\frac{d h}{ \lambda c - \beta h}, 
\frac{\beta (\lambda c- \beta h)}{c d p} - \frac{ a}{p} \Biggr).
\end{equation}

% ---------------------------------------------------

\section{Local asymptotic stability}
\label{sec:3}

Consider the following coordinate transformation:
\begin{equation*}
X(t) = x(t)-\bar{x} \, ,\quad
Y(t) = y(t)-\bar{y} \, , \quad
Z(t) = z(t)-\bar{z} \, , 
\end{equation*}
where $(\bar{x},\bar{y},\bar{z})$ denotes any equilibrium 
of \eqref{model:delay}: $E_0$, $E_1$ or $E_2$.
The linearized system of \eqref{model:delay} is of form
\begin{equation}
\label{mod:linear}
\begin{cases}
\dot{X}(t)= (-d-\beta \bar{y})X(t) - \beta\bar{x}Y(t) \, ,\\
\dot{Y}(t)= (-a-p\bar{z})Y(t)+\beta\bar{y}X(t-\tau) +\beta\bar{x}Y(t-\tau) - p\bar{y}Z(t) \, , \\
\dot{Z}(t)= c\bar{y}\bar{z}X(t)+c\bar{x}\bar{z}Y(t)+(c \bar{x}\bar{y} -h)Z(t) \, .
\end{cases}
\end{equation}
We can express system \eqref{mod:linear} in matrix notation as follows: 
$$
\frac{d}{dt} 
\left( 
\begin{array}{c}
X(t) \\ 
Y(t) \\
Z(t) 
\end{array} 
\right) 
= A_{1} \left( \begin{array}{c}
X(t) \\ 
Y(t) \\
Z(t) 
\end{array} \right) 
+ A_{2} \left( 
\begin{array}{c}
X(t-\tau) \\ 
Y(t-\tau) \\
Z(t-\tau) 
\end{array}
\right),
$$
where $A_{1}$ and $A_{2}$ are the $3 \times 3$ matrices given by
$$
A_{1}=\left( \begin{array}{ccc}
-d-\beta\bar{y} & -\beta\bar{x} & 0  \\ 
0 & -a-p\bar{z} & -p\bar{y}   \\
c\bar{y}\bar{z} & c\bar{x}\bar{z} & c \bar{x}\bar{y}-h
\end{array} \right)
$$
and
$$
A_{2}=\left( \begin{array}{ccc}
0 & 0 & 0  \\ 
\beta\bar{y} & \beta\bar{x} & 0  \\
0 & 0 & 0 
\end{array} \right) \, . 
$$

%---------------------------------------------------

\subsection{Local stability of the infection-free equilibrium $E_0$ for any time delay $\tau$}

The characteristic equation of system \eqref{model:delay} is given by
\begin{equation}
\label{eq:charact:geral}
\Delta (\lambda_{1})=\det(\lambda_{1}Id-A_{1}-e^{-\lambda_{1}\tau}A_{2}) = 0,
\end{equation}
where $Id$ denotes the identity matrix of dimension 3. The following result holds.

\begin{theorem}
(i) If $\beta \lambda - ad < 0$, then the infection-free equilibrium $E_{0}$ \eqref{eq:E0}
is locally asymptotically stable for any time delay $\tau \geq 0$. 
(ii) If  $\beta \lambda - ad > 0$, then $E_0$ \eqref{eq:E0} 
is unstable for any time-delay $\tau \geq 0$.
(iii) If $\beta \lambda - ad = 0$, then a critical case occurs.
\end{theorem}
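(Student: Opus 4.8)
The plan is to compute the characteristic equation \eqref{eq:charact:geral} explicitly at the equilibrium $E_0=(\lambda/d,0,0)$, where $\bar{x}=\lambda/d$, $\bar{y}=0$, $\bar{z}=0$. Substituting these values, the matrices $A_1$ and $A_2$ become sparse: $A_1$ is upper triangular except for the $(2,3)$ entry which vanishes since $\bar{y}=0$, and $A_2$ has a single nonzero entry $\beta\bar{x}=\beta\lambda/d$ in position $(2,2)$. Hence $\lambda_1 Id - A_1 - e^{-\lambda_1\tau}A_2$ is lower/upper triangular-ish, and its determinant factors as a product of the three diagonal-type terms. I expect to obtain
\begin{equation*}
\Delta(\lambda_1) = (\lambda_1 + d)\left(\lambda_1 + a - \tfrac{\beta\lambda}{d}e^{-\lambda_1\tau}\right)\left(\lambda_1 + h\right) = 0,
\end{equation*}
possibly up to sign and a relabeling of which factor carries the exponential; the $(\lambda_1+d)$ and $(\lambda_1+h)$ factors come from the first and third rows, and the middle factor from the $Y$-equation. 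The roots $\lambda_1=-d<0$ and $\lambda_1=-h<0$ are automatically in the open left half-plane, so the entire stability question reduces to the transcendental equation $g(\lambda_1):=\lambda_1+a-\frac{\beta\lambda}{d}e^{-\lambda_1\tau}=0$.

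For part (i), assume $\beta\lambda-ad<0$, equivalently $\frac{\beta\lambda}{d}<a$. I would argue by contradiction: suppose $g$ has a root $\lambda_1$ with $\mathrm{Re}(\lambda_1)\ge 0$. Then $|\lambda_1 + a|\ge a$ (since $\mathrm{Re}(\lambda_1+a)\ge a>0$), while $\left|\frac{\beta\lambda}{d}e^{-\lambda_1\tau}\right| = \frac{\beta\lambda}{d}e^{-\tau\,\mathrm{Re}(\lambda_1)} \le \frac{\beta\lambda}{d} < a$, because $\mathrm{Re}(\lambda_1)\ge 0$ makes the exponential factor at most $1$. This contradicts $g(\lambda_1)=0$, so all roots have negative real part and $E_0$ is locally asymptotically stable for every $\tau\ge 0$. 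For part (ii), assume $\beta\lambda-ad>0$. Here I would restrict to real $\lambda_1$ and define $f(\lambda_1)=\lambda_1+a-\frac{\beta\lambda}{d}e^{-\lambda_1\tau}$ on $[0,\infty)$. Then $f(0)=a-\frac{\beta\lambda}{d}<0$, while $f(\lambda_1)\to+\infty$ as $\lambda_1\to+\infty$ (the linear term dominates the decaying exponential), so by the intermediate value theorem $f$ has a positive real root; this gives an eigenvalue with positive real part, hence $E_0$ is unstable, independently of $\tau$. For part (iii), when $\beta\lambda-ad=0$ one has $f(0)=0$, i.e. $\lambda_1=0$ is a root of the characteristic equation, a zero eigenvalue on the imaginary axis, so linearization is inconclusive — this is the critical case, and I would simply note that higher-order terms are needed and leave it as stated.

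The only mildly delicate point is being careful with the bound $e^{-\tau\,\mathrm{Re}(\lambda_1)}\le 1$ in part (i): it holds precisely because $\tau\ge 0$ and $\mathrm{Re}(\lambda_1)\ge 0$, which is exactly the regime under examination, and it is what makes the result uniform in $\tau$. A secondary point worth stating is why the factorization of $\Delta(\lambda_1)$ is exact at $E_0$ — namely that $\bar{y}=0$ kills the coupling entries $-p\bar{y}$, $c\bar{y}\bar{z}$, and $\beta\bar{y}$, decoupling the $X$- and $Z$-equations from the characteristic polynomial's cross terms. Everything else is a routine determinant computation and an elementary sign/monotonicity analysis, so I do not anticipate a genuine obstacle; the "hard part," such as it is, is just bookkeeping the transcendental factor correctly.
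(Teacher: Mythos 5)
Your proof is correct, and your factorization of the characteristic equation at $E_0$ agrees exactly with the paper's equation \eqref{eq:charac:E0}. The genuine difference is in part (i). The paper follows the classical two-step route: it first checks stability at $\tau=0$ (where the third root is $(\beta\lambda-da)/d<0$) and then argues via Rouch\'{e}'s theorem/continuity in $\tau$, so that it only has to exclude purely imaginary roots; it does this by setting $\lambda_1=wi$, separating real and imaginary parts, and deriving $w^2=(\lambda^2\beta^2-d^2a^2)/d^2<0$, a contradiction. Your modulus estimate --- $|\lambda_1+a|\ge \mathrm{Re}(\lambda_1)+a\ge a>\beta\lambda/d\ge \left|(\beta\lambda/d)\,e^{-\lambda_1\tau}\right|$ whenever $\mathrm{Re}\,\lambda_1\ge 0$ and $\tau\ge 0$ --- is a different and more economical argument: it excludes every root of the closed right half-plane in one stroke, uniformly in $\tau$, with no need for the $\tau=0$ base case or a continuation argument. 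Part (ii) is the same intermediate-value argument as in the paper. The one place where you do slightly less than the paper is (iii): you exhibit the root $\lambda_1=0$ and stop, whereas the paper also shows that $0$ is the \emph{only} root with nonnegative real part (via $(u+a)^2+w^2=e^{-2u\tau}a^2\le a^2$). Strictly speaking, to justify the word ``critical'' one must also rule out roots with positive real part --- otherwise the equilibrium would simply be unstable; your own bound from part (i), applied with $\beta\lambda/d=a$ and the strict inequality $|\lambda_1+a|>a$ valid for $\mathrm{Re}\,\lambda_1>0$, closes this in one line, so the gap is cosmetic rather than structural.
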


\begin{proof}
(i) The characteristic equation \eqref{eq:charact:geral} associated 
to the infection-free equilibrium is given by
\begin{equation}
\label{eq:charac:E0}
\frac{(d + \lambda_1)(h + \lambda_1)(ad + d\lambda_1 
	- \beta\lambda e^{-\lambda_1\tau})}{d}=0  \, .
\end{equation}
Assume that $\tau = 0$. In this case, the equation \eqref{eq:charac:E0} becomes
\begin{equation}
\label{eq:charac:E0:tau0}
\frac{(d + \lambda_1)(h + \lambda_1)(ad 
+ d\lambda_1 - \beta\lambda e^{-\lambda_1\tau})}{d}=0 \, .
\end{equation}
The roots of \eqref{eq:charac:E0:tau0} are $-d$, $-h$ and $\frac{\beta \lambda  - d a}{d}$, 
which have negative real part when $\beta \lambda - da  < 0$. 
Suppose that $\tau > 0$. To prove the stability of $E_0$ we use Rouch\'{e}'s theorem. 
Thus, we need to prove that the roots of the characteristic equation \eqref{eq:charac:E0} 
cannot intersect the imaginary axis, i.e., the characteristic equation cannot 
have pure imaginary roots. Suppose the contrary, i.e., that there exists 
a nonzero $w$ such that $b = w i$ is a solution of \eqref{eq:charac:E0}. 
Equation \eqref{eq:charac:E0} has two real negative solutions $-d$ and $-h$. 
Therefore, in what follows we just consider the term 
$\frac{ad + d\lambda_1 - \beta\lambda e^{-\lambda_1\tau}}{d}$. 
The complex $b = w i$ is a root of \eqref{eq:charac:E0} if  
$\frac{ad + d w i - \beta\lambda e^{-w i \tau}}{d} = 0$.
By using Euler's formula $\exp^{-i w \tau} = \cos(w \tau) -i \sin(w \tau)$,
and by separating real and imaginary parts, we have
\begin{equation*}
\begin{cases}
d a =\beta \lambda \cos(w \tau) \\
d w = - \beta \lambda \sin(w \tau) \, .
\end{cases}
\end{equation*}
Adding up the squares of both equations, we obtain that
$$ 
d^2 w^2 + d^2 a^2  - \lambda^2 \beta^2 = 0,
$$
that is, 
$$
w^2 =\frac{\lambda^2 \beta^2 - d^2 a^2}{d^2} \, .
$$
If $\beta \lambda - da <0$, then $w^2 < 0$, which is a contradiction. 
We just proved that the characteristic equation \eqref{eq:charac:E0} 
cannot have pure imaginary roots and the infection-free equilibrium 
$E_0$ is locally asymptotically stable for any strictly positive time delay.

(ii) Suppose now that $\beta \lambda - da > 0$. We know that 
the characteristic equation \eqref{eq:charac:E0} 
has two real negative roots: $\lambda_1 = -d$ and $\lambda_1 = -h$. 
Thus, we must check if the remaining roots of function 
$f(\lambda_1 ) := a + \lambda_1 - \frac{\beta\lambda e^{-\lambda_1\tau}}{d}$
have negative real parts. It is easy to see that $f(0) = a - \frac{\beta\lambda}{d} < 0$, 
since we are assuming $\beta \lambda - da > 0$. On the other hand, 
$\displaystyle \lim_{\lambda_1 \to + \infty} f(\lambda_1) = +\infty$. 
Therefore, by continuity of $f(\lambda_1)$, there is at least one 
positive root of the characteristic equation \eqref{eq:charac:E0}. 
Hence, we conclude that $E_0$ is unstable. 

(iii) Finally, we analyze the case $\beta \lambda - da = 0$. 
In this situation, the characteristic equation \eqref{eq:charac:E0} becomes 
\begin{equation}
\label{eq:charac:R0=1}
(d+\lambda_1)(h+\lambda_1)(\lambda_1 + a -\exp^{-\lambda_1 \,\tau} a) = 0 \, .
\end{equation}
To prove the stability we need to check again if all the roots 
of the above equation have negative real parts. Notice that $\lambda_1 = 0$, 
$\lambda_1 = -d$ and $\lambda_1 = -h$ are solutions of this equation, 
so we just need to prove that the remaining roots cannot have nonnegative 
real parts. Assuming that $\lambda_1 = u + w i $ with $u \geq 0 $ 
is a solution of the above equation, then 
$$
u+wi+a-\exp^{- \left( u+wi \right) \tau} a = 0 \, .
$$
By using Euler formula and separating the real and imaginary parts, we get 
\begin{equation*}
\begin{cases}
u + a = {{\rm e}^{-u\tau}}a\cos \left( w\tau \right) \\
w  = -{{\rm e}^{-u\tau}}a \sin \left( w\tau \right) \, .
\end{cases}
\end{equation*}
By adding up the squares of both equations, and using 
the fundamental trigonometric formula, we obtain
$$
(u + a )^2 + w^2 = \exp(-2u \tau) a^2 \leq a^2,
$$
which is a contradiction. This proves that $0$ is the unique 
root of \eqref{eq:charac:R0=1} that does not have negative real part. 
\end{proof}
	
%---------------------------------------------------

\subsection{Local stability of the infected equilibrium $E_1$ for any time delay $\tau$}

We now study the local stability of the 
the infected equilibrium $E_1$ \eqref{eq:E1}
for any incubation period $\tau$.

\begin{theorem}
Let $\beta \lambda - da > 0$ and assume that $\lambda c - \beta h > 0$. 
(i) If $\beta (\lambda c - \beta h) - acd < 0$, then the infected equilibrium 
$E_1$ is locally asymptotically stable for any time delay $\tau \geq 0$. 
(ii) If $\beta (\lambda c - \beta h) - acd > 0$, then 
$E_1$ is unstable for any time delay $\tau \geq 0$.
\end{theorem}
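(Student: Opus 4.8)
The plan is to exploit the block structure that the characteristic equation \eqref{eq:charact:geral} inherits from the fact that the $z$-component of $E_1$ vanishes. At $E_1$ one has $\bar z=0$, so the matrix $A_2$ has zero third row and the $(3,1)$ and $(3,2)$ entries of $A_1$ vanish; hence the third row of $\lambda_1 Id - A_1 - e^{-\lambda_1\tau}A_2$ is $(0,0,\lambda_1 - c\bar x\bar y + h)$. Expanding the determinant along that row, the characteristic equation at $E_1$ factors as
\[
\bigl(\lambda_1 - c\bar{x}\bar{y} + h\bigr)
\Bigl[(\lambda_1 + d + \beta\bar{y})(\lambda_1 + a - \beta\bar{x}e^{-\lambda_1\tau})
+ \beta^2\bar{x}\bar{y}\,e^{-\lambda_1\tau}\Bigr] = 0 .
\]
Substituting $\bar{x} = a/\beta$ and $\bar{y} = (\lambda\beta - da)/(\beta a)$ from \eqref{eq:E1}, I would simplify $c\bar{x}\bar{y} - h = \bigl(\beta(\lambda c - \beta h) - acd\bigr)/\beta^2$, while $\beta\bar{x} = a$ and $d + \beta\bar{y} = \lambda\beta/a =: m$, so the bracketed factor collapses (via the cancellation $(\lambda_1 + m)\beta\bar{x} - \beta^2\bar{x}\bar{y} = a(\lambda_1+d)$) to the quadratic quasi-polynomial $P(\lambda_1) := (\lambda_1 + m)(\lambda_1 + a) - a(\lambda_1 + d)e^{-\lambda_1\tau}$.

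With the factorization in hand, part (ii) is immediate: the first factor contributes the real root $\lambda_1 = \bigl(\beta(\lambda c - \beta h) - acd\bigr)/\beta^2$, which is strictly positive exactly when $\beta(\lambda c - \beta h) - acd > 0$ and does not depend on $\tau$, so $E_1$ is unstable for every $\tau \geq 0$. For part (i) that same root is strictly negative, and everything reduces to showing that all zeros of $P$ lie in the open left half-plane for all $\tau \geq 0$.

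To handle $P$ I would mimic the argument used in the paper for $E_0$. At $\tau = 0$, $P(\lambda_1) = \lambda_1^2 + m\lambda_1 + a(m-d)$; since $\beta\lambda - da > 0$ gives $m - d = \beta\bar{y} > 0$ and $m > 0$, the Routh--Hurwitz test for a quadratic shows both roots have negative real part. For $\tau > 0$, I would rule out roots on the imaginary axis: $\lambda_1 = 0$ is excluded because $P(0) = a(m-d) \neq 0$, and writing $\lambda_1 = wi$ with $w \neq 0$, applying Euler's formula, separating real and imaginary parts, then squaring and adding, yields $w^4 + m^2 w^2 + a^2(m^2 - d^2) = 0$; because $m > d > 0$ all coefficients are positive and no real $w$ satisfies it — a contradiction. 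By Rouch\'e's theorem, using continuity of the characteristic roots in $\tau$ together with the fact that the principal term $\lambda_1^2$ is unaffected by the delay and that no root can cross the imaginary axis, the zeros of $P$ remain in the open left half-plane for all $\tau \geq 0$; combined with the negativity of the first factor's root this gives local asymptotic stability of $E_1$.

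The main work is the routine but error-prone algebra: simplifying the $2\times 2$ minor using the explicit coordinates of $E_1$, and the square-and-add bookkeeping producing the biquadratic $w^4 + m^2 w^2 + a^2(m^2 - d^2)$, after which only sign checks (driven by the hypothesis $\beta\lambda - da>0$) remain. The one genuinely conceptual point — the same one invoked for $E_0$ — is justifying, via continuity of the roots and the absence of imaginary-axis crossings, that stability at $\tau=0$ propagates to every $\tau>0$.
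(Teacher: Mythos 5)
Your proposal is correct and follows essentially the same route as the paper: the characteristic equation at $E_1$ factors into the linear term with root $\bigl(\beta(\lambda c-\beta h)-acd\bigr)/\beta^2$ (giving instability in case (ii)) times a quadratic quasi-polynomial, which is handled by Routh--Hurwitz at $\tau=0$ and by the square-and-add exclusion of pure imaginary roots for $\tau>0$; your biquadratic $w^4+m^2w^2+a^2(m^2-d^2)=0$ with $m=\lambda\beta/a$ is exactly the paper's relation $a^2w^4+\lambda^2\beta^2w^2=a^2(a^2d^2-\lambda^2\beta^2)$. The only difference is cosmetic: you obtain the factorization cleanly from the block structure of $\lambda_1 Id-A_1-e^{-\lambda_1\tau}A_2$ at $\bar z=0$ and work with the tidier form $P(\lambda_1)=(\lambda_1+m)(\lambda_1+a)-a(\lambda_1+d)e^{-\lambda_1\tau}$, whereas the paper writes the same quantities out explicitly.
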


\begin{proof}
Let $\beta \lambda - da > 0$ and $\lambda c - \beta h > 0$. 
The characteristic equation \eqref{eq:charact:geral} 
at $E_1 =\Biggl( \frac{a}{\beta}, 
\frac{\lambda \beta - d a}{\beta a}, 0 \Biggr) $ is given by
\begin{equation}
\label{eq:charac:E1}
{\frac { \left( \lambda_1 \, \beta^{2}-\beta\,
\lambda\,c+{\beta}^{2}h+acd \right)  
\left( - \lambda_1^{2}a- \lambda_1 \,{a}^{2}
+\lambda_1 \,{a}^{2}{{\rm e}^{-\lambda_1 \,\tau}}
-\lambda\,\beta\,\lambda_1 -\lambda\,\beta\,a
+{{\rm e}^{-\lambda_1 \,\tau}}{a}^{2}d 
\right) }{a{\beta}^{2}}} = 0 \, .
\end{equation}
Note that $\lambda_1 = \frac {\beta\,\lambda\,c-{\beta}^{2}h-acd}{{\beta}^{2}}$ 
is a solution of \eqref{eq:charac:E1}. 
(i) If $\beta (\lambda c - \beta h) - acd < 0$, 
then $\lambda_1 = \frac {\beta\,\lambda\,c-{\beta}^{2}h-acd}{{\beta}^{2}}$ 
is a real negative root of the characteristic equation 
\eqref{eq:charac:E1} and we just need to analyze the equation
\begin{equation}
\label{eq:charac:E1:simpl}
\frac { \lambda_1^{2}a + \lambda_1 ({a}^{2} 
+ \lambda\,\beta) + \lambda\,\beta\,a 
- (\lambda_1 + d){a}^{2}{{\rm e}^{-\lambda_1 
\,\tau}} }{a{\beta}^{2}} = 0 \, .
\end{equation}
Consider $\tau = 0$. From equation \eqref{eq:charac:E1:simpl}, we have 
\begin{equation}
\label{eq:charac:E1:tau0}
{\frac {{\lambda_1}^{2}a + \lambda\,\beta\,\lambda_1 +\lambda\,
\beta\,a - {a}^{2}d}{a{\beta}^{2}}} = 0.
\end{equation}
Since $\beta \lambda - da > 0$,
it follows that $\frac{1}{\beta^2} > 0$, 
$\frac{\lambda}{a \beta} > 0$ 
and $\frac{\lambda\,\beta\,a - {a}^{2}d}{a{\beta}^{2}} 
= \frac{\lambda\, \beta  - a d}{{\beta}^{2}} > 0$. Therefore, 
from the Routh--Hurwitz criterion, it follows that all roots 
of \eqref{eq:charac:E1:tau0} have negative real part. 
Hence, $E_1$ is locally asymptotically stable for $\tau=0$.
Let $\tau > 0$. Suppose that \eqref{eq:charac:E1:simpl} 
has pure imaginary roots $\pm w i$. By replacing $\lambda_1$ 
in \eqref{eq:charac:E1:simpl} by $w i$, 
and separate the real and imaginary parts, 
we obtain
\begin{equation*}
\begin{cases}
-{w}^{2}a + \lambda\,\beta\,a 
= {a}^{2}d\cos \left( w\tau \right) 
+ w{a}^{2}\sin \left( w\tau \right)\\
w{a}^{2} + \lambda\,\beta\,w =  w{a}^{2}\cos\left( w\tau \right)
- {a}^{2}d\sin \left( w\tau \right).
\end{cases}
\end{equation*}
By adding up the squares of both equations, 
and using the fundamental trigonometric formula, 
we obtain that
\begin{equation*}
{a}^{2}{w}^{4}+{w}^{2}{\lambda}^{2}{\beta}^{2} 
= {a}^{2}(a^2 {d}^{2} -{\lambda}^{2}{\beta}^{2} ) \, ,
\end{equation*}
which is a contradiction since $\beta \lambda - da > 0$. 
Therefore, $a^2 {d}^{2} -{\lambda}^{2}{\beta}^{2} < 0$ 
and equation \eqref{eq:charac:E1:simpl} does not have 
pure imaginary roots. This implies that $E_1$ is locally 
asymptotically stable for any time delay $\tau > 0$. 
(ii) If $\beta > \frac{a c d}{\lambda c - \beta h}$,  
then the characteristic equation \eqref{eq:charac:E1} 
has a positive root and consequently the equilibrium 
$E_1$ is unstable for any time delay  $\tau \geq 0$.
\end{proof}

%---------------------------------------------------

\subsection{Local stability of the CTL equilibrium $E_2$}

The analysis of the local stability of the CTL equilibrium $E_2$
is more complex. Under some assumptions, the situation is clear
for $\tau = 0$: the infected equilibrium $E_2$ 
is locally asymptotically stable (see Theorem~\ref{thm:stab:E2}). 
However, for $\tau > 0$, the characteristic polynomial has pure imaginary roots 
and we are not able to conclude anything about the stability for an arbitrary 
$\tau > 0$ (see Remark~\ref{rem:nothing;conclc:stab:E2}).
It is, however, possible to prove stability in some concrete situations
of biological significance (see Remark~\ref{rem:eclipse}).

\begin{theorem}
\label{thm:stab:E2}
Assume that $\lambda c - \beta h > 0$. 
If $\beta (\lambda c - \beta h) - acd > 0$,  
then the infected equilibrium $E_2$ 
is locally asymptotically stable for $\tau = 0$. 
\end{theorem}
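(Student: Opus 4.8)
The plan is to use that for $\tau = 0$ the characteristic equation \eqref{eq:charact:geral} collapses to $\det\bigl(\lambda_1 Id - (A_1+A_2)\bigr)=0$, so the task reduces to showing that the Jacobian $J:=A_1+A_2$ evaluated at $E_2$ is Hurwitz (all eigenvalues with negative real part). First I would record the two algebraic identities satisfied by the coordinates of $E_2=\bigl(\bar x,\bar y,\bar z\bigr)$, namely $c\bar x\bar y = h$ and $a + p\bar z = \beta\bar x$, which follow directly from \eqref{eq:E2}. Substituting these into $A_1+A_2$ kills the $(2,2)$ and $(3,3)$ entries, so $J$ reduces to the matrix with rows $(-d-\beta\bar y,\,-\beta\bar x,\,0)$, $(\beta\bar y,\,0,\,-p\bar y)$, $(c\bar y\bar z,\,c\bar x\bar z,\,0)$.

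Next I would expand $\det(\lambda_1 Id - J)$ along the first row; because of the two vanishing diagonal entries this is short and yields the cubic $\lambda_1^3 + a_1\lambda_1^2 + a_2\lambda_1 + a_3$ with $a_1 = d + \beta\bar y$, $\ a_2 = \beta^2\bar x\bar y + cp\,\bar x\bar y\bar z$, and $a_3 = cdp\,\bar x\bar y\bar z$ (the constant term simplifies because $cp\bar x\bar y\bar z(d+\beta\bar y) - cp\beta\bar x\bar y^2\bar z = cdp\,\bar x\bar y\bar z$). The point is that under the hypotheses $\lambda c - \beta h > 0$ and $\beta(\lambda c - \beta h) - acd > 0$ all three coordinates are strictly positive: $\bar x = \tfrac{\lambda c - \beta h}{cd} > 0$, $\bar y = \tfrac{dh}{\lambda c - \beta h} > 0$, and $\bar z = \tfrac{\beta(\lambda c - \beta h) - acd}{cdp} > 0$; hence $a_1,a_2,a_3 > 0$.

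Then I would invoke the Routh–Hurwitz criterion for a cubic: local asymptotic stability holds iff $a_1 > 0$, $a_3 > 0$ and $a_1 a_2 - a_3 > 0$. The first two are immediate. For the third, $a_1 a_2 - a_3 = (d+\beta\bar y)(\beta^2\bar x\bar y + cp\,\bar x\bar y\bar z) - cdp\,\bar x\bar y\bar z$; the summand $cdp\,\bar x\bar y\bar z$ cancels and one is left with $d\beta^2\bar x\bar y + \beta^3\bar x\bar y^2 + cp\beta\,\bar x\bar y^2\bar z$, which is manifestly positive. This finishes the proof.

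There is no deep obstacle here: the only thing demanding care is the bookkeeping — using the equilibrium relations $c\bar x\bar y = h$ and $a+p\bar z = \beta\bar x$ to bring $J$ into its reduced form, and then tracking the cancellation in $a_1a_2 - a_3$. (The genuine difficulty — that for $\tau > 0$ the characteristic polynomial at $E_2$ picks up pure imaginary roots — is exactly what Remark~\ref{rem:nothing;conclc:stab:E2} warns cannot be settled in general, and so is deliberately outside the scope of this theorem.)
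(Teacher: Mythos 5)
Your proof is correct and follows essentially the same route as the paper: both reduce to the same cubic characteristic polynomial at $\tau=0$ (your $a_1,a_2,a_3$ coincide with the paper's $D,E,F$ after substituting the explicit coordinates of $E_2$) and conclude via the Routh--Hurwitz conditions $a_1>0$, $a_3>0$, $a_1a_2-a_3>0$. The only difference is presentational: by keeping the equilibrium identities $c\bar x\bar y=h$ and $a+p\bar z=\beta\bar x$ symbolic, you make the positivity checks transparent, whereas the paper verifies them on the fully expanded parameter expressions.
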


\begin{proof}
Let $\beta (\lambda c - \beta h) - acd > 0$. 
The characteristic equation \eqref{eq:charact:geral} 
at $E_2$ \eqref{eq:E2} is given by
\begin{equation}
\label{eq:charac:E2}
\lambda_1^3 + A \lambda_1^2 + B \lambda_1 + C 
+ \left(-{\frac { \lambda_1 \,\beta \left( \lambda_1(c\lambda - \beta\,h) 
+d(c \lambda- \beta\,h) \right)}{cd}} \right) \exp(-\lambda_1 \tau) = 0 \, ,
\end{equation}	
where
$$
A = {\frac {{\lambda}^{2}{c}^{2}\beta+{d}^{2}\lambda\,{c}^{2}-2\,\lambda\,
c{\beta}^{2}h+{\beta}^{3}{h}^{2}}{ \left( \lambda\,c-\beta\,h \right) cd}}, 
\quad B = {\frac {c\lambda\,d\beta+c\lambda\,\beta\,h-chad-{\beta}^{2}{h}^{2}}{cd}} 
\text{ and } 
C= -{\frac { \left( -\beta\,\lambda\,c+{\beta}^{2}h+acd \right) h}{c}}.
$$ 
If $\tau = 0$, then the characteristic equation \eqref{eq:charac:E2} is
$\lambda_1^3 + D \lambda_1^2 + E \lambda_1 + F = 0$
with $D = {\frac {d\lambda\,c}{\lambda\,c-\beta\,h}}  > 0$, 
$E = {\frac { \left( \beta\,\lambda\,c - acd+d{\beta}^{2}
-{\beta}^{2}h \right) h}{cd}} > 0$, 
$F = {\frac { \left( \beta\,\lambda\,c-{\beta}^{2}h-acd \right) h}{c}} > 0$ 
and $D E - F = {\frac {h\beta\, \left( c\lambda\,d\beta
+ h(c\lambda\,\beta-cad-{\beta}^{2}{h}) \right) }{\left( 
\lambda\,c-\beta\,h \right) c}} > 0$, 
whenever $\beta (\lambda c - \beta h) - acd > 0$. 
\end{proof}

\begin{remark}
\label{rem:nothing;conclc:stab:E2}
Let $\tau > 0$. Suppose that \eqref{eq:charac:E2} 
has pure imaginary roots $\pm w i$. Replacing 
$\lambda_1$ in \eqref{eq:charac:E2} by $w i$, 
and separating the real and imaginary parts, we obtain that
\begin{equation*}
\begin{cases}
G {w}^{2}+H = J {w}^{2}+ K w\\[0.2cm]
L {w}^{3}+ M w = N {w}^{2}+ P w,
\end{cases}
\end{equation*}
where
\begin{equation*}
\begin{split}
G &=  -{\beta}^{3}{h}^{2}-\beta\,{c}^{2}{\lambda}^{2}
-{c}^{2}{d}^{2}\lambda+2\,{\beta}^{2}ch\lambda \, ,\\
H &= d{\lambda}^{2}{c}^{2}h\beta+{h}^{2}\beta\,{d}^{2}ac
-{d}^{2}\lambda\,{c}^{2}ha-2\,d\lambda\,c{\beta}^{2}{h}^{2}
+{h}^{3}{\beta}^{3}d \, ,\\
J &= \beta\, \left( {\beta}^{2}{h}^{2}\cos \left( w\tau \right) 
+{\lambda}^{2}{c}^{2}\cos\left( w\tau \right) 
-2\,\lambda\,c\beta\,h\cos \left(w\tau \right)\right) \, ,\\
K &= \beta\, \left( -d{\lambda}^{2}{c}^{2}\sin \left( 
w\tau \right) +2\,d\lambda\,c\beta\,h\sin \left( w\tau\right) 
-{\beta}^{2}{h}^{2}d\sin \left( w\tau \right)  \right) \, ,\\
L &= -\lambda\,{c}^{2}d+\beta\,hcd \, , \\ 
M &=  {\beta}^{3}{h}^{3}-\lambda\,{c}^{2}had-d\lambda\,c{\beta}^{2}h
+\beta\,{h}^{2}a cd-2\,\lambda\,c{\beta}^{2}{h}^{2}
+d{\lambda}^{2}{c}^{2}\beta+{\lambda}^{2}{c}^{2}h\beta \, ,\\
N &=  -\beta\, \left( \sin \left( w\tau \right) {c}^{2}{\lambda}^{2}
-2\,\sin\left( w\tau \right) \beta\,ch\lambda
+\sin \left( w\tau \right){\beta}^{2}{h}^{2} \right) \, , \\
P &= -\beta\, \left( \cos \left( w\tau\right) {\beta}^{2}d{h}^{2}
-2\,\cos \left( w\tau \right) \beta\,cdh\lambda
+\cos\left( w\tau \right) {c}^{2}d{\lambda}^{2} \right)  \, .
\end{split}
\end{equation*} 
By adding up the squares of both equations, 
and using the fundamental trigonometric formula, we obtain that
\begin{equation}
\label{eq:caract:E2:taupos}
Q {w}^{6}+ R {w}^{4}+ S {w}^{2} + T = 0,
\end{equation}
where
\begin{equation*}
\begin{split}
Q &=  {\lambda}^{2}{c}^{4}{d}^{2}+{\beta}^{2}{h}^{2}{c}^{2}{d}^{2}
-2\,\lambda\,{c}^{3}{d}^{2}\beta\,h, \\
R &= 6\,{\lambda}^{2}{c}^{3}d{\beta}^{2}{h}^{2}
+2\,{\lambda}^{2}{c}^{4}{d}^{2}ha+{d}^{4}{\lambda}^{2}{c}^{4}
+2\,{\beta}^{4}{h}^{4}cd-2\,{\lambda}^{3}{c}^{4}dch\beta
+2\,{\beta}^{2}{h}^{3}a{c}^{2}{d}^{2}-6\,{\beta}^{3}{h}^{3}
\lambda\,{c}^{2}d-4\,\lambda\,{c}^{3}{d}^{2}\beta\,{h}^{2}a,\\
S &=  -2\,{\beta}^{6}{h}^{5}d-{\beta}^{6}{d}^{2}{h}^{4}
-2\,{d}^{3}{\lambda}^{3}{c}^{4}h\beta
-2\,{h}^{3}{\beta}^{3}{d}^{3}\lambda\,{c}^{2}
-2\,{\beta}^{4}{h}^{4}{d}^{2}ac+2\,{d}^{4}{\lambda}^{2}{c}^{4}ha\\
&+4\,{d}^{3}{\lambda}^{2}{c}^{3}{\beta}^{2}{h}^{2}
+4\,{\beta}^{5}{h}^{3}{d}^{2}\lambda\,c-6\,{\beta}^{3}{h}^{4}\lambda\,{c}^{2}ad
-2\,{\lambda}^{2}{c}^{3}{h}^{2}a{d}^{2}{\beta}^{2}
-2\,\lambda\,{c}^{3}{h}^{3}{a}^{2}{d}^{2}\beta\\
&+6\,{\lambda}^{2}{c}^{3}{h}^{3}ad{\beta}^{2}
-2\,{\lambda}^{3}{c}^{4}{h}^{2}ad\beta+4\,{d}^{2}
\lambda\,{c}^{2}{\beta}^{3}{h}^{3}a+{\beta}^{6}{h}^{6}
-4\,{\beta}^{5}{h}^{5}\lambda\,c
+6\,{\beta}^{4}{h}^{4}{\lambda}^{2}{c}^{2}\\
&-4\,{\lambda}^{3}{c}^{3}{\beta}^{3}{h}^{3}
+{\lambda}^{4}{c}^{4}{h}^{2}{\beta}^{2}
-2\,{h}^{2}\beta\,{d}^{4}a{c}^{3}\lambda+6\,{\beta}^{5}{h}^{4}d\lambda\,c
+2\,{\beta}^{4}{h}^{5}acd-6\,{\beta}^{4}{h}^{3}d{\lambda}^{2}{c}^{2}\\
&+{\lambda}^{2}{c}^{4}{h}^{2}{a}^{2}{d}^{2}
-5\,{d}^{2}{\lambda}^{2}{c}^{2}{\beta}^{4}{h}^{2}
+2\,{d}^{2}{\lambda}^{3}{c}^{3}{\beta}^{3}h
+2\,d{\lambda}^{3}{c}^{3}{\beta}^{3}{h}^{2}
+{\beta}^{2}{h}^{4}{a}^{2}{c}^{2}{d}^{2},\\
T &= -4\,{d}^{2}{\lambda}^{3}{c}^{3}{\beta}^{3}{h}^{3}
-4\,{d}^{2}\lambda\,c{\beta}^{5}{h}^{5}
+{d}^{2}{\lambda}^{4}{c}^{4}{h}^{2}{\beta}^{2}
-2\,{h}^{3}\beta\,{d}^{4}{a}^{2}{c}^{3}\lambda
-6\,{h}^{4}{\beta}^{3}{d}^{3}a{c}^{2}\lambda
+{h}^{6}{\beta}^{6}{d}^{2}\\
&+{h}^{4}{\beta}^{2}{d}^{4}{a}^{2}{c}^{2}+2\,{h}^{5}{\beta}^{4}{d}^{3}ac
+6\,{h}^{3}{\beta}^{2}{d}^{3}a{c}^{3}{\lambda}^{2}-2\,{d}^{3}{
\lambda}^{3}{c}^{4}{h}^{2}a\beta+{d}^{4}{\lambda}^{2}{c}^{4}{h}^{2}{a}^{2}
+6\,{d}^{2}{\lambda}^{2}{c}^{2}{\beta}^{4}{h}^{4} \, .
\end{split}
\end{equation*}
This equation admits at least two pure imaginary roots. Indeed, 
let $\lambda= 1$, $d = \frac{1}{10}$, $\beta = \frac{1}{2}$, $a = \frac{1}{5}$, 
$p = 1$, $c =\frac{1}{10}$ and $h = \frac{1}{10}$. Then, 
$\lambda c - \beta h = \frac{1}{20} > 0$, $\beta(\lambda c - \beta h) - acd 
=  \frac{23}{1000} > 0$ and equation \eqref{eq:caract:E2:taupos} is given by
$$
{w}^{6}-{\frac {21}{50}}\,{w}^{4}+{\frac {1731}{5000}}\,{w}^{2}
+{\frac {529}{1000000}} = 0 \, .
$$
This equation admits two pure imaginary roots given by
$$
{\frac {1/20\,i\sqrt {2}\sqrt {\sqrt [3]{174036
+4\,\sqrt {5409904729}}\left(  \left( 174036
+4\,\sqrt {5409904729} \right) ^{2/3}-3832
-28\,\sqrt [3]{174036+4\,\sqrt{5409904729}}\right)}}{\sqrt[3]{174036
+4\,\sqrt {5409904729}}}}
$$
and
$$
{\frac {-1/20\,i\sqrt {2}\sqrt {\sqrt [3]{174036
+4\,\sqrt {5409904729}} \left(  \left( 174036
+4\,\sqrt {5409904729} \right)^{2/3}-3832-28\,\sqrt [3]{174036
+4\,\sqrt {5409904729}} \right) }}{\sqrt [3]{174036
+4\,\sqrt {5409904729}}}} \, .
$$
Therefore, from Rouch\'e theorem, we cannot conclude anything
about the stability of the CTL equilibrium $E_2$.
\end{remark}

\begin{remark}
\label{rem:eclipse}
According with different studies, the eclipse phase 
represented by the time delay $\tau$ can take from 7 to 21 days  
\cite{Busch,Cohen,Coombs,Kahn}. Based on this, let us assume 
$\tau=10$ days. In Section~\ref{sec:numerical} we show, 
numerically, that the infected equilibrium $E_2$ 
is locally asymptotically stable for $\tau = 10$ 
and the parameter values from Table~\ref{table:parameter} 
with $\beta = 0.5$. This is easy to show analytically:
the characteristic equation \eqref{eq:charac:E2} 
in this case is given by $q(\lambda_1) = 0$ with
$$
q(\lambda_1) = -1/4\,{{\rm e}^{-\lambda_1\,\tau}}\lambda_1
-\frac{5}{2}\,{{\rm e}^{-\lambda_1\,\tau}}{\lambda_1}^{2}
+{\frac{23}{1000}}+{\frac {73\,\lambda_1}{100}}
+{\frac {27\,{\lambda_1}^{2}}{10}}+{\lambda_1}^{3},
$$ 
$q(0) = 23/1000$, and the derivative 
is always positive for $\lambda_1 \geq 0$. 
Therefore, $q(\lambda_1)$ does not have nonnegative real roots.
Analogously, we can show that $E_2$ is locally asymptotically 
stable for other positive values of the time delay $\tau$. 
This will be considered in Section~\ref{sec:numerical}. 
\end{remark}

% ---------------------

\section{Optimal control problem with state and control delays}
\label{sec:OC}

It is interesting to introduce drug therapy into the model by assuming that treatment
reduces the rate of viral replication, expressed by $(1-u)\beta xy$,
where $0 \leq  u \leq 1$. Our aim is to find a treatment strategy $u(t)$ 
that maximizes the number of $CD4^{+}$ T cells $x$
as well as the number of CTL immune response cells $z$, keeping the cost, 
measured in terms of chemotherapy strength and a combination 
of duration and intensity, as low as possible. 
Due to the importance of the pharmacological delay 
in the HIV treatment, we consider a discrete time delay in the control 
variable $u(t)$, denoted by $\xi$, which represents the delay that occurs
between the administration of drug and its appearance within cells, 
due to the time required for drug absorption, distribution, 
and penetration into the target cells \cite{PerelsonEtAllHIVdynamicsScience1996}.
Precisely, we propose the following control system with discrete time delays 
in state and control variables:
\begin{equation}
\label{eq:model:delay:control}
\begin{cases}
\dot{x}(t)={\lambda}-dx(t)-(1-u(t-\xi)){\beta}x(t)y(t),\\
\dot{y}(t)=(1-u(t-\xi)){\beta}x(t-\tau)y(t-\tau)-ay(t)-py(t)z(t),\\
\dot{z}(t)=cx(t)y(t)z(t)-hz(t).
\end{cases}
\end{equation}
The initial conditions for the state variable $z$ and, due to the delays, 
initial functions for the state variables $x$ and $y$ and control $u$, 
are given by
\begin{equation}
\label{eq:initcond:delays}
\begin{split}
z(0) &= z_0 \geq 0,\\ 
x(t) &\equiv x_0 \geq 0 \text{ for } -\tau \leq t \leq 0,\\  
y(t) &\equiv y_0 \geq 0 \text{ for } -\tau \leq t \leq 0, \\
u(t) &\equiv u_0, \text{ where } u_0 \in [0, 1],  \text{ for } -\xi \leq t < 0.
\end{split}
\end{equation}
The set of admissible control functions is given by
\begin{equation*}
\Theta = \biggl\{ u(\cdot) \in L^1\left([0, t_f], \R \right) 
\, | \,  0 \leq u(t) \leq 1 \,   
\, \forall \, t \in [0, t_f] \, \biggr\} 
\end{equation*}
and the objective functional is
\begin{equation}
\label{costfunction}
J(u(\cdot)) = \int_0^{t_f} 
\left[ x(t) + z(t) - u(t) \right] dt, 
\end{equation}
which measures the concentration of $CD4^{+}$ T and CTL cells
and the cost measured in terms of chemotherapy strength 
and a combination of duration and intensity. The optimal control
problem consists in determining a control function 
$u(\cdot) \in \Theta$ that maximizes 
the cost functional \eqref{costfunction} subject to the control system 
\eqref{eq:model:delay:control} and initial conditions \eqref{eq:initcond:delays}. 
In \cite{CulshawRuanSpiteri2004}, the authors consider a different $L^2$ 
cost functional for a non-delayed control system.  
We claim that our delayed control system \eqref{eq:model:delay:control} describes 
better the reality. Moreover, as we shall see, the extremals obtained with 
our $L^1$ cost functional \eqref{costfunction} are easier to implement 
from a medical point of view. 

We apply the optimality conditions given by the Pontryagin Maximum Principle 
for multiple delayed optimal control problems of G\"ollmann and Maurer
\cite[Theorem~3.1]{Goellmann-Maurer-14}. For that,
we introduce the delayed state variables 
$\zeta(t) =x(t - \tau)$, $\eta(t) = y(t - \tau)$ 
and the control variable $v(t) = u(t - \xi)$.
Using the adjoint variable $\psi = \left( \lambda_x, 
\lambda_y, \lambda_z \right) \in \R^3$, 
the Hamiltonian for the cost functional 
\eqref{costfunction} and the control system 
\eqref{eq:model:delay:control} is given by
\begin{equation*}
H(x, \zeta, y, \eta, z, \psi, u, v) 
= x + z - u + \lambda_x \left( \lambda-d x -(1-v)\beta x y \right)
+ \lambda_y \left((1-v)\beta \zeta \eta - a y - p y z \right) 
+ \lambda_z \left( c xyz - h z \right).  
\end{equation*}
The adjoint equations are given by 
\begin{equation*}
\begin{cases}
\dot{\lambda}_x(t) = - H_x[t] 
- \chi_{[0, t_f-\tau]} H_\zeta[t + \tau],\\
\dot{\lambda}_y(t) = - H_y[t]  
- \chi_{[0, t_f-\tau]} H_\eta[t + \tau],\\
\dot{\lambda}_z(t) = - H_z[t],
\end{cases}
\end{equation*}
where the subscripts denote partial derivatives and 
$\chi_{[0, t_f-\tau]}$ is the characteristic function 
on the interval $[0, t_f - \tau]$ (see \cite{Goellmann-Maurer-14}). 	
Since the terminal state is free, i.e.,
$(x(t_f), y(t_f), z(t_f)) \in \R^3$,  
the transversality conditions
$$ 
\lambda_x(t_f)=\lambda_y(t_f) = \lambda_z(t_f) = 0
$$	
hold. To characterize the optimal control $u$, we introduce 
the following \emph{switching function}:
\begin{equation*}
\begin{split}
\phi(t) &= H_u[t] + \chi_{[0, t_f - \xi]} H_v[t + \xi]\\
&= \begin{cases}
-1 + \lambda_x(t +\xi) \beta x(t + \xi) y(t + \xi) 
- \lambda_y \beta \zeta(t + \xi) \eta(t + \xi)  
\quad \text{for} \quad 0 \leq t \leq t_f - \xi,\\
-1 \quad \text{for} \quad t_f - \xi \leq t \leq t_f.
\end{cases}
\end{split}	
\end{equation*}	
The maximality condition of the Pontryagin Maximum Principle 
\cite[Theorem~3.1]{Goellmann-Maurer-14} gives the control law
\begin{equation}
\label{control-law}
u(t) = \left\{
\begin{array}{rcl}
1 &&\mbox{if} \quad \phi(t) > 0,  \\[1mm]
0 && \mbox{if} \quad \phi(t) < 0,  \\[1mm]
{\rm singular} &&  \mbox{if} \quad \phi_k(t) = 0
\;\; \mbox{on} \; I_s \subset [0,t_f].
\end{array}
\right.
\end{equation}	
	
% ---------------------------------------------

\section{Numerical simulations}
\label{sec:numerical}	

We begin by showing numerically, in Section~\ref{sec:stab:EP},
the local stability of the equilibrium points
$E_0$ and $E_2$ that was proved in Section~\ref{sec:3}.
Then, in Section~\ref{subsec:OC}, we apply the necessary conditions
of optimal control of Section~\ref{sec:OC} to a situation 
of fast convergence to the CTL equilibrium $E_2$.

% -------------------------------	

\subsection{Stability of the equilibrium points}	
\label{sec:stab:EP}

Following \cite{CulshawRuanSpiteri2004,WodarzNowak}, 
we consider the parameter values of Table~\ref{table:parameter}. 
% ---------------------------
\begin{table}[htbp]
\begin{center}
\begin{tabular}{c|l|r} \hline
Parameter & Description &  Value \\[0.1cm] \hline
$\lambda$ & source rate of $CD4+ T$ cells  & $1$ $cells/day$ \\
$d$& decay rate of $CD4+ T$ cells   & $0.1 $ $cells/day$\\
$\beta$& rate $CD4+ T$ cells become infected & $[0.00025, 0.5]$  $cells/day$\\
$a$& death rate infected, not by CTL killing   & $0.2$  $cells/day$\\
$p$& rate at which infected cells are killed by CTLs   & $1/day$  \\
$c$& immune response activation rate & $0.1/day$\\
$h$& death rate of CTLs  & $0.1/day$\\ \hline			
\end{tabular}
\caption{Parameter values.}
\label{table:parameter}
\end{center}
\end{table}
% ---------------------------

Two different initial conditions for the state variable $z$ and, 
due to the delays, initial functions for the state variables 
$x$ and $y$, are considered: 
\begin{equation}
\label{eq:initcond:delays:numer}
\begin{split}
x(t) &\equiv 45, \quad y(t) \equiv 3, \quad -\tau \leq t \leq 0,\\ 
z(0) &= 20,
\end{split}
\end{equation}
and 
\begin{equation}
\label{eq:initcond:delays:numer:Culschaw}
\begin{split}
x(t) &\equiv 5, \quad y(t) \equiv 1, \quad -\tau \leq t \leq 0,\\ 
z(0) &= 2.
\end{split}
\end{equation}
Consider the parameter values of Table~\ref{table:parameter} and $\beta = 0.00025$. 
For these parameter values, we have $\beta \lambda - d a = -0.0208 < 0$. 
Let the time delay $\tau$ be equal to 10 days: $\tau = 10$.  In Figure~\ref{fig:stab:E0} 
we observe the convergence of the variables $x$, $y$, $z$ to the steady state 
$E_0 = \left(\frac{\lambda}{d}, 0, 0 \right) = \left(10, 0, 0 \right)$. 
% -----------------------------------------------------------------
\begin{figure}[htb]
\centering
\subfloat[\footnotesize{$x(t)$, $t \in [0, 500]$}]{\label{Stab:E0:x}
\includegraphics[width=0.45\textwidth]{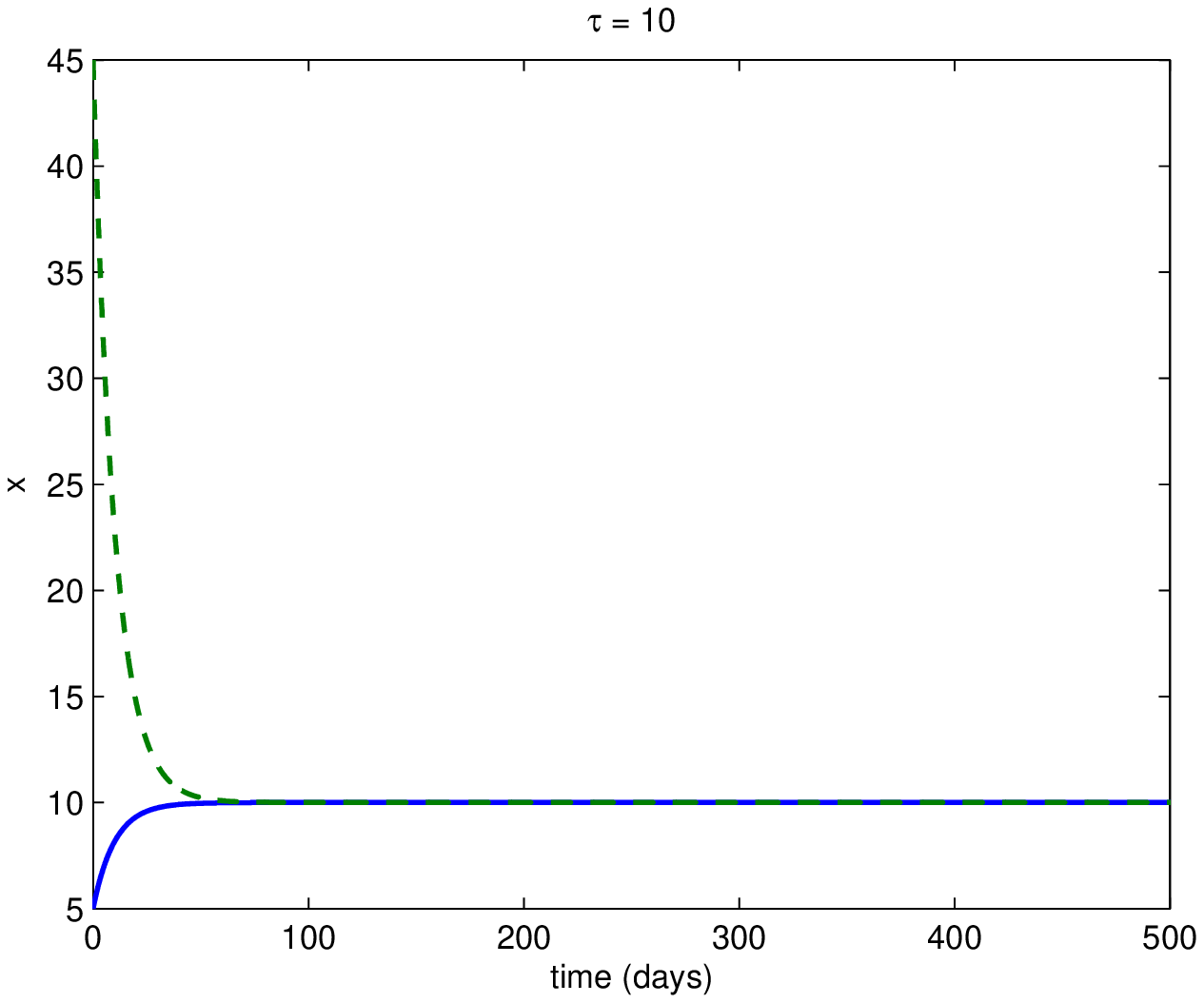}}
\subfloat[\footnotesize{$(y(t), z(t)$, $t\in [0, 500]$}]{\label{Stab:E0:yz}
\includegraphics[width=0.45\textwidth]{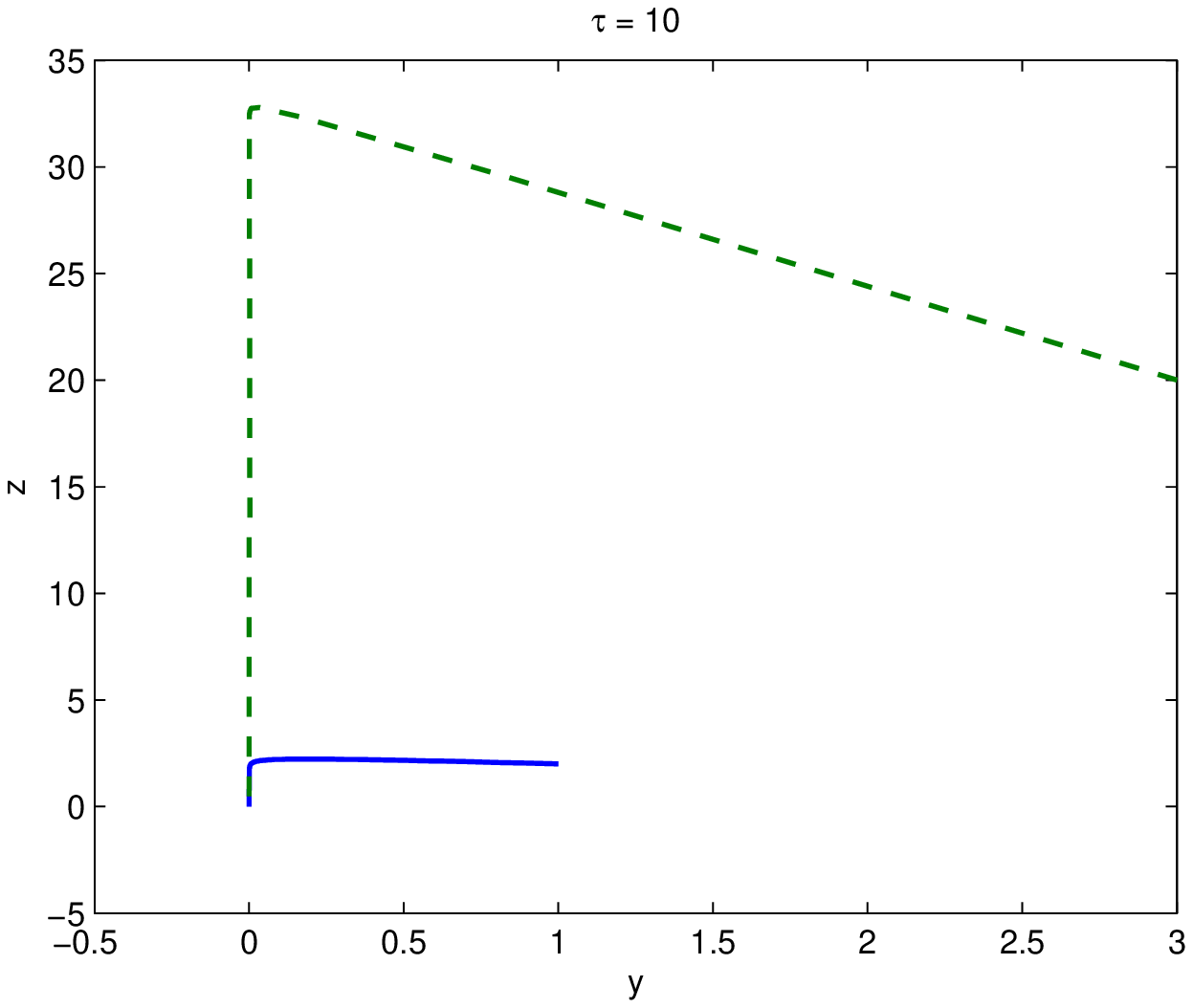}}
\caption{Infection-free equilibrium $E_0$ for parameter values given 
in Table~\ref{table:parameter}, $\beta = 0.00025$ and time delay $\tau=10$ days. 
The dashed line corresponds to the initial conditions \eqref{eq:initcond:delays:numer} 
and the continuous line corresponds to the initial conditions 
\eqref{eq:initcond:delays:numer:Culschaw}.}
\label{fig:stab:E0}
\end{figure}
% -----------------------------------------------------------------

The initial conditions \eqref{eq:initcond:delays:numer:Culschaw} are closer 
to the infected equilibrium point $E_2$ for the parameter values 
of Table~\ref{table:parameter} and $\beta = 0.5$. For these parameter values, 
one has $\beta \lambda - d a = 0.48 > 0$, $\lambda c - \beta h = 0.05 > 0$ 
and $\beta (\lambda c - \beta h) - acd = 0.023 > 0$. In Figure~\ref{fig:stab:E2:2CI}, 
we observe the convergence of the variables $x$, $y$, $z$ to the steady state 
$E_2=\Biggl(  \frac{\lambda c - \beta h}{c d}, \frac{d h}{ \lambda c - \beta h}, 
\frac{\beta (\lambda c- \beta h)}{c d p} - \frac{ a}{p} \Biggr) 
= \left( 5, 0.2, 2.3  \right)$ by considering the initial conditions 
\eqref{eq:initcond:delays:numer} and \eqref{eq:initcond:delays:numer:Culschaw} 
and $\tau = 10$. 
% -----------------------------------------------------------------
\begin{figure}[htb]
\centering
\subfloat[\footnotesize{$x(t)$, $t \in [0, 500]$}]{\label{Stab:E2:x}
\includegraphics[width=0.33\textwidth]{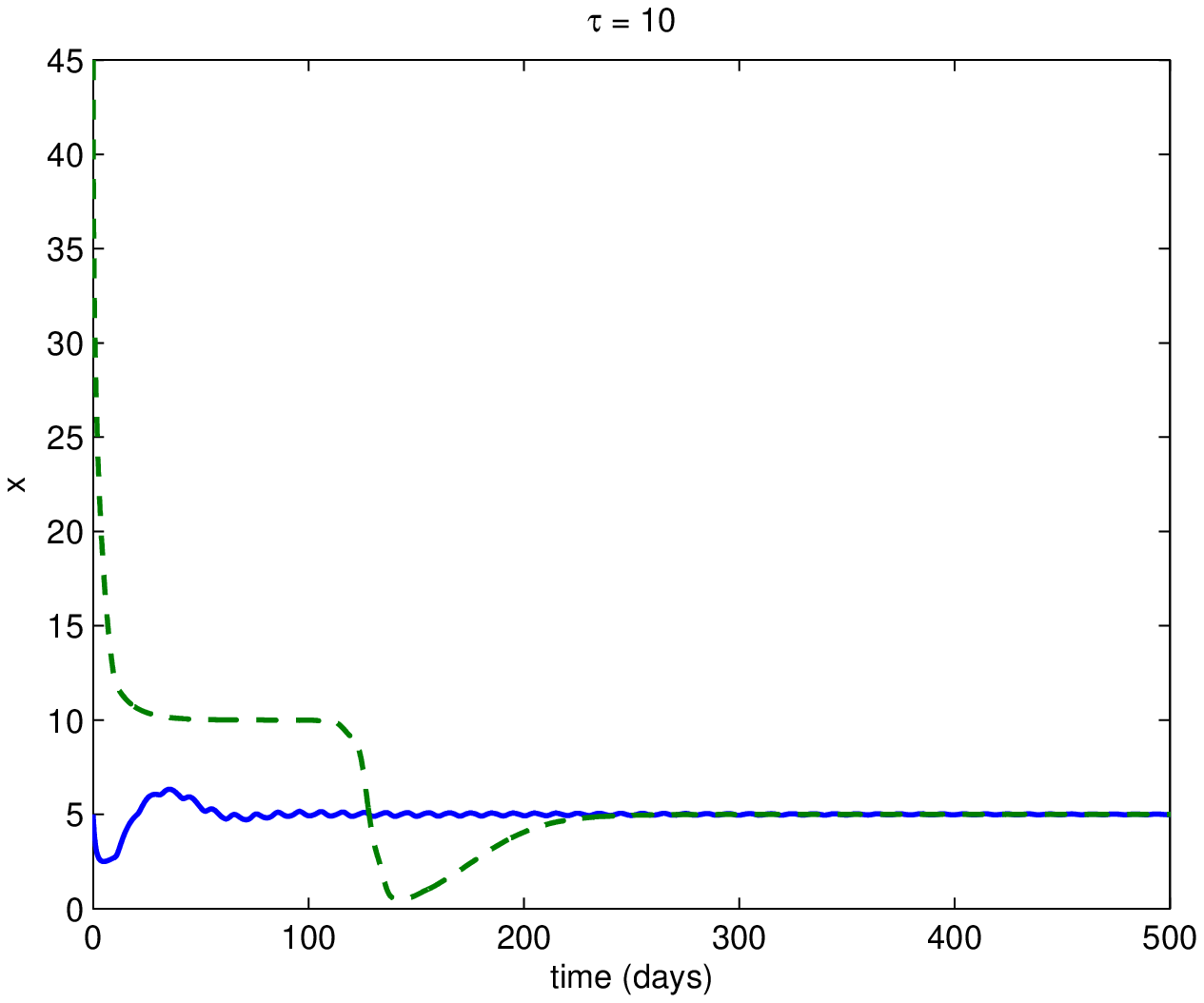}}
\subfloat[\footnotesize{$y(t)$, $t \in [0, 500]$}]{\label{Stab:E2:y}
\includegraphics[width=0.33\textwidth]{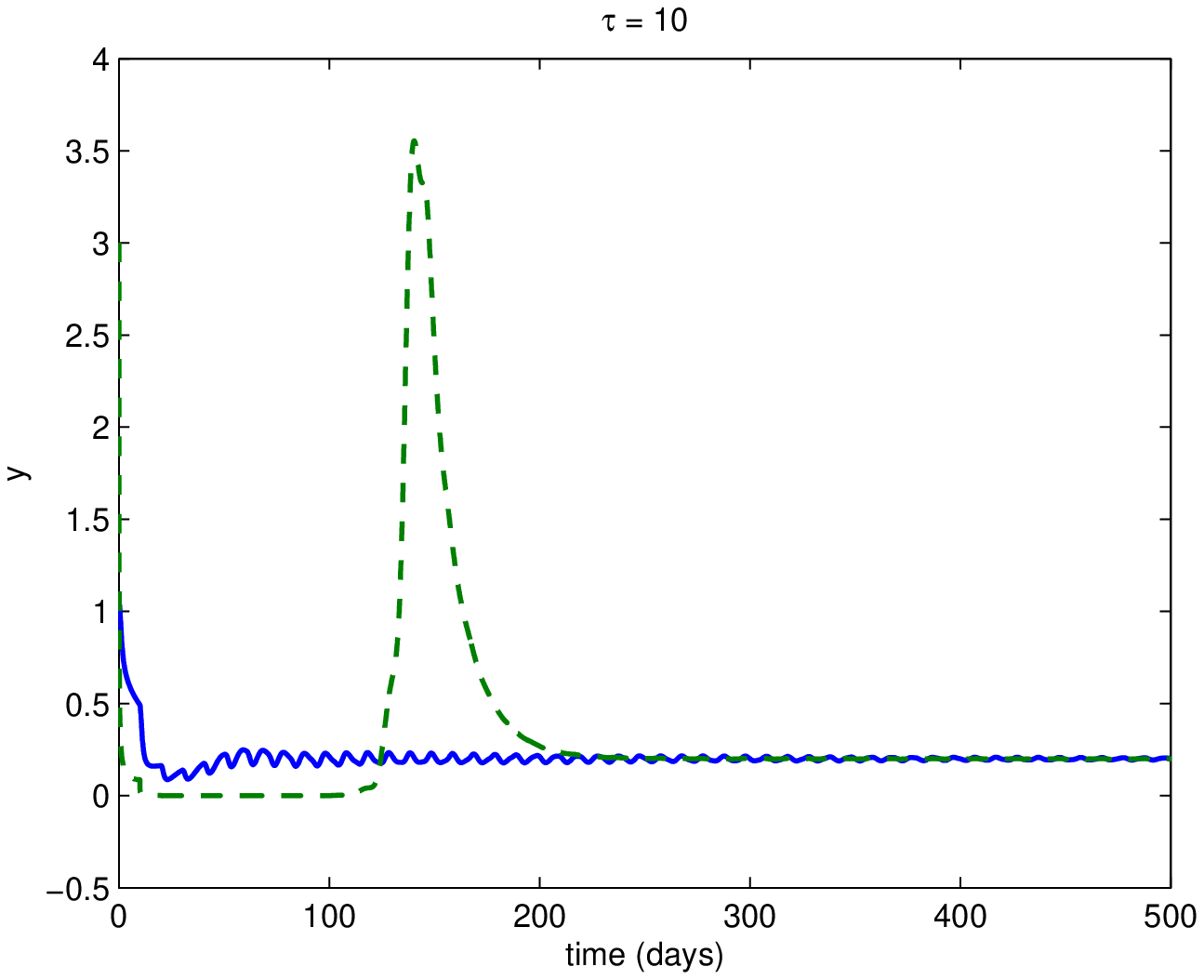}}
\subfloat[\footnotesize{$z(t)$, $t \in [0, 500]$}]{\label{Stab:E2:z}
\includegraphics[width=0.33\textwidth]{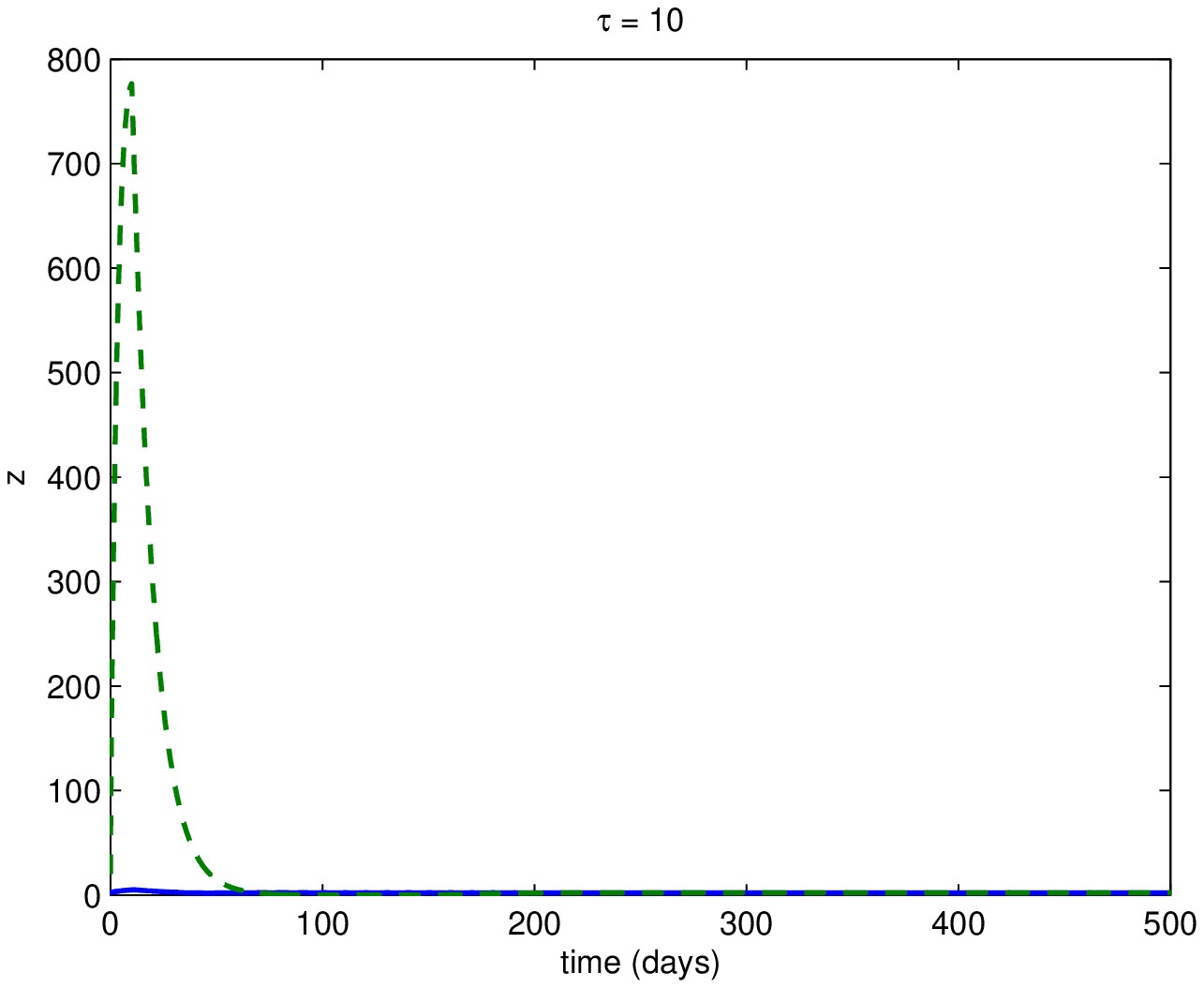}}
\caption{Endemic CTL equilibrium $E_2$ for the parameter values given 
in Table~\ref{table:parameter}, $\beta = 0.5$ and time delay $\tau=10$. 
The dashed line corresponds to the initial conditions 
\eqref{eq:initcond:delays:numer} and the continuous line 
corresponds to the initial conditions 
\eqref{eq:initcond:delays:numer:Culschaw}.}
\label{fig:stab:E2:2CI}
\end{figure}
% -------------------------------	

There are situations where the convergence to stability is much slower.
This is illustrated in Figure~\ref{fig:fastconv:tau1:tau10}: 
slower convergence for $\tau = 10$ versus faster convergence
for $\tau = 1$.
% -----------------------------------------------------------------
\begin{figure}[htb]
\centering
\subfloat[\footnotesize{$x(t)$, $t \in [0, 250]$}]{\label{Stab:E2:x:tau1:10}
\includegraphics[width=0.33\textwidth]{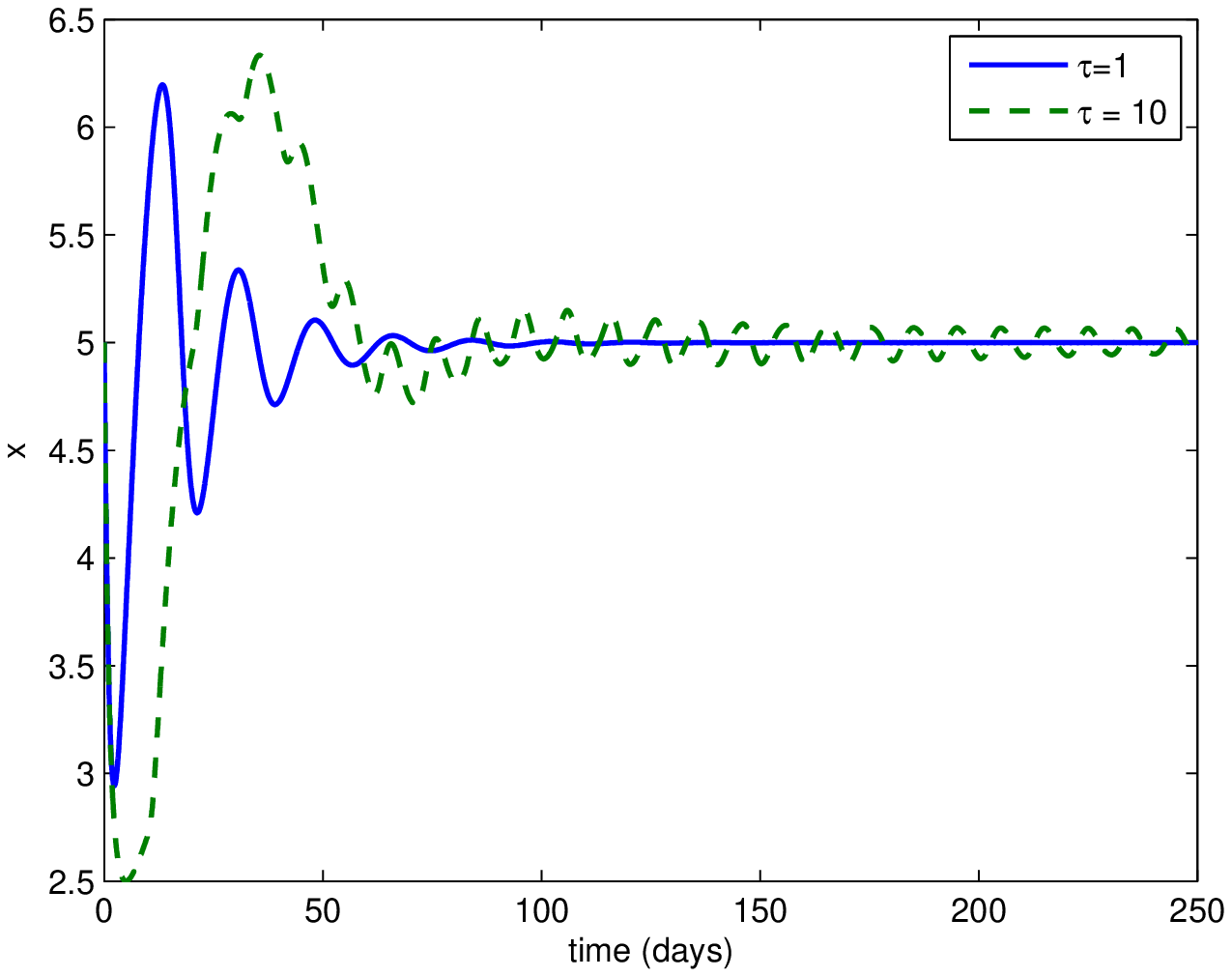}}
\subfloat[\footnotesize{$y(t)$, $t \in [0, 250]$}]{\label{Stab:E2:y:tau1:10}
\includegraphics[width=0.33\textwidth]{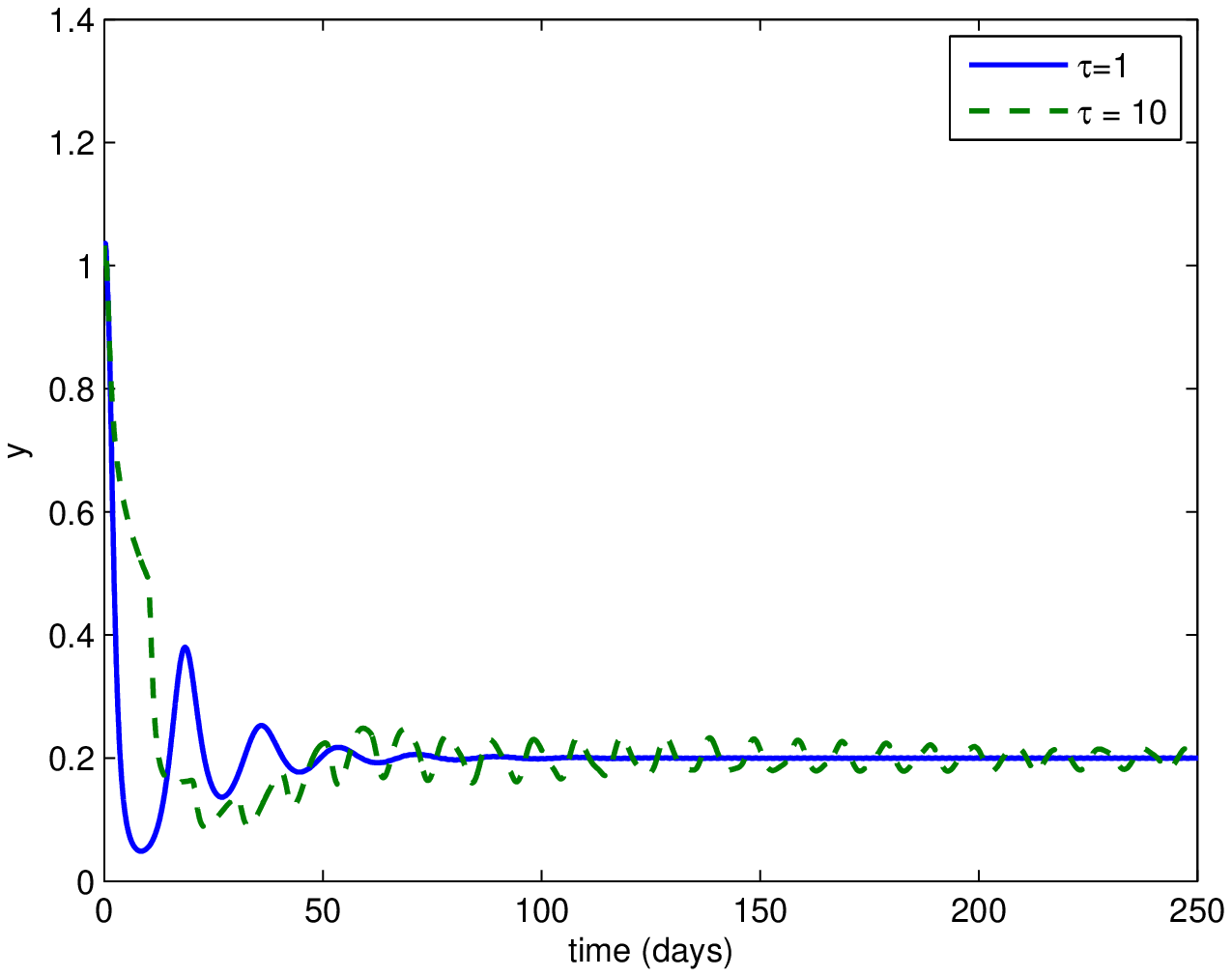}}
\subfloat[\footnotesize{$z(t)$, $t \in [0, 250]$}]{\label{Stab:E2:z:tau1:10}
\includegraphics[width=0.33\textwidth]{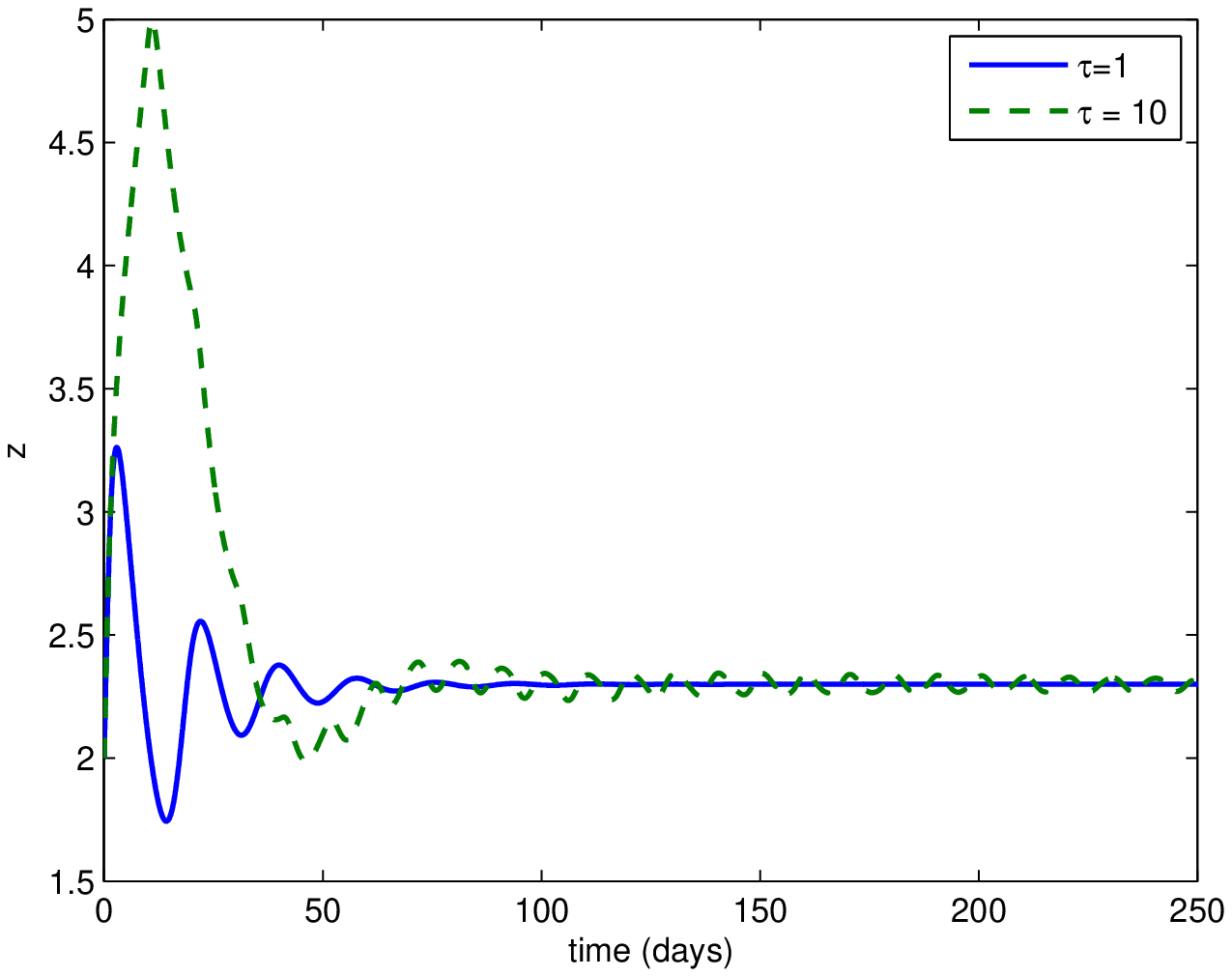}}
\caption{Endemic CTL equilibrium $E_2$ for parameter values given 
in Table~\ref{table:parameter}, $\beta = 0.5$ and initial conditions 
\eqref{eq:initcond:delays:numer:Culschaw}. The dashed line corresponds 
to $\tau = 10$ and the continuous line corresponds $\tau = 1$.}
\label{fig:fastconv:tau1:tau10}
\end{figure}

% ----------------------------------------------------------------

\subsection{Optimal control}	
\label{subsec:OC}

Let us consider the initial conditions 
\eqref{eq:initcond:delays:numer:Culschaw} 
and the initial function for the control given by
\begin{equation*}
u(0) \equiv 0, \quad -\xi \leq t < 0.
\end{equation*}
The extremal for the non-delayed (i.e., $\tau = \xi = 0$) 
optimal control problem with a $L^2$ functional was investigated 
in \cite{CulshawRuanSpiteri2004}. Figure~\ref{fig:semdelay} shows 
that the extremal control for the $L^1$ functional \eqref{costfunction}
is completely different from the $L^2$ case studied in \cite{CulshawRuanSpiteri2004}: 
with $\tau = \xi = 0$ and the $L^1$ functional \eqref{costfunction}, 
the extremal control is bang-bang with several switchings 
while the $L^2$ control extremal \cite{CulshawRuanSpiteri2004}
is singular after an initial short period of time. 
We conclude that the $L^1$ functional \eqref{costfunction} is more suitable,
from a medical point of view, because a bang-bang control 
is much easier to implement than the singular control of \cite{CulshawRuanSpiteri2004}.
% -----------------------------------------------------------------
\begin{figure}[htb]
\centering
\subfloat[\footnotesize{$x(t)$, $y(t)$ and $z(t)$}]{\label{states:semdelay}
\includegraphics[width=0.45\textwidth]{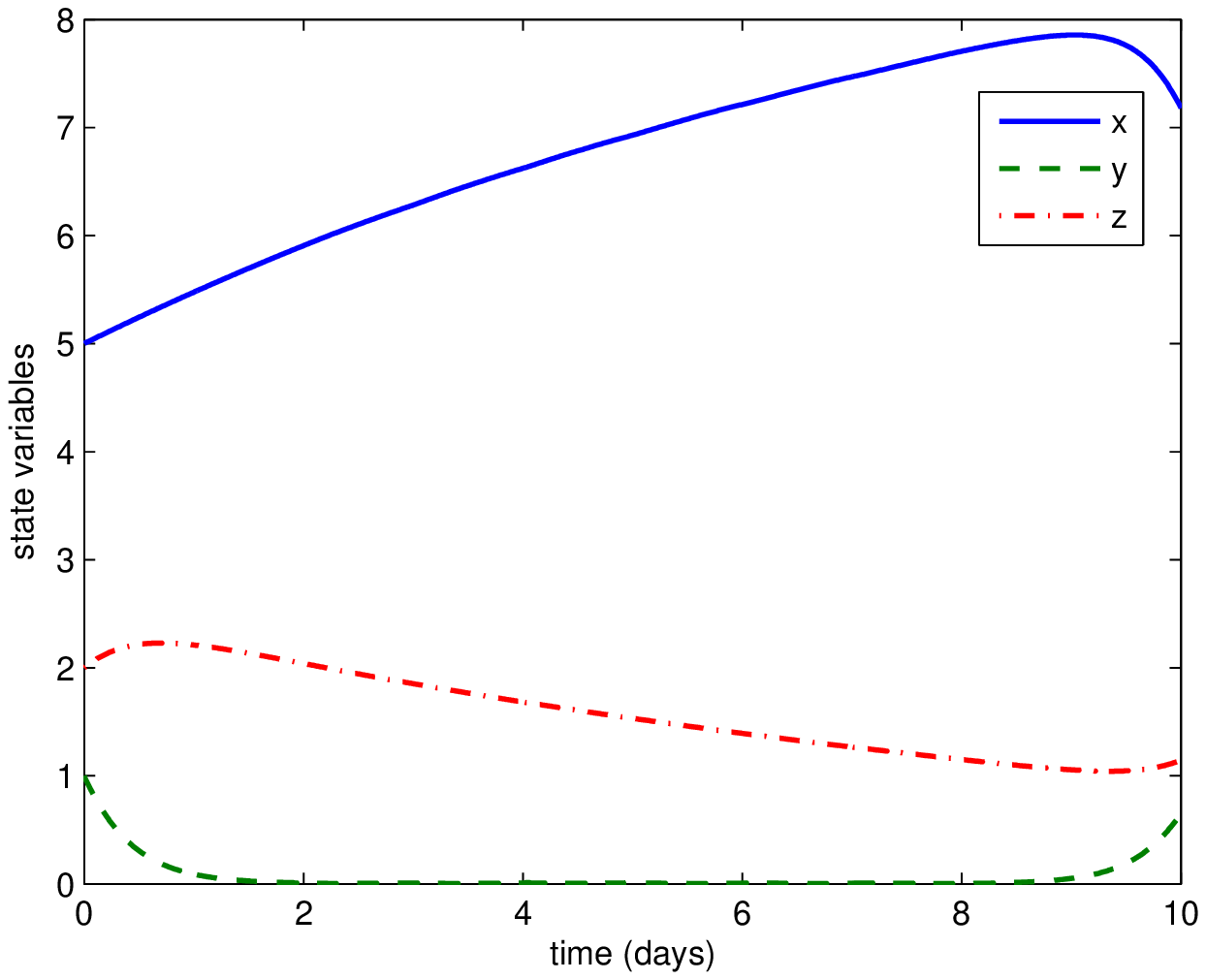}}
\subfloat[\footnotesize{$u(t)$ \eqref{control-law}}]{\label{control:semdelay}
\includegraphics[width=0.45\textwidth]{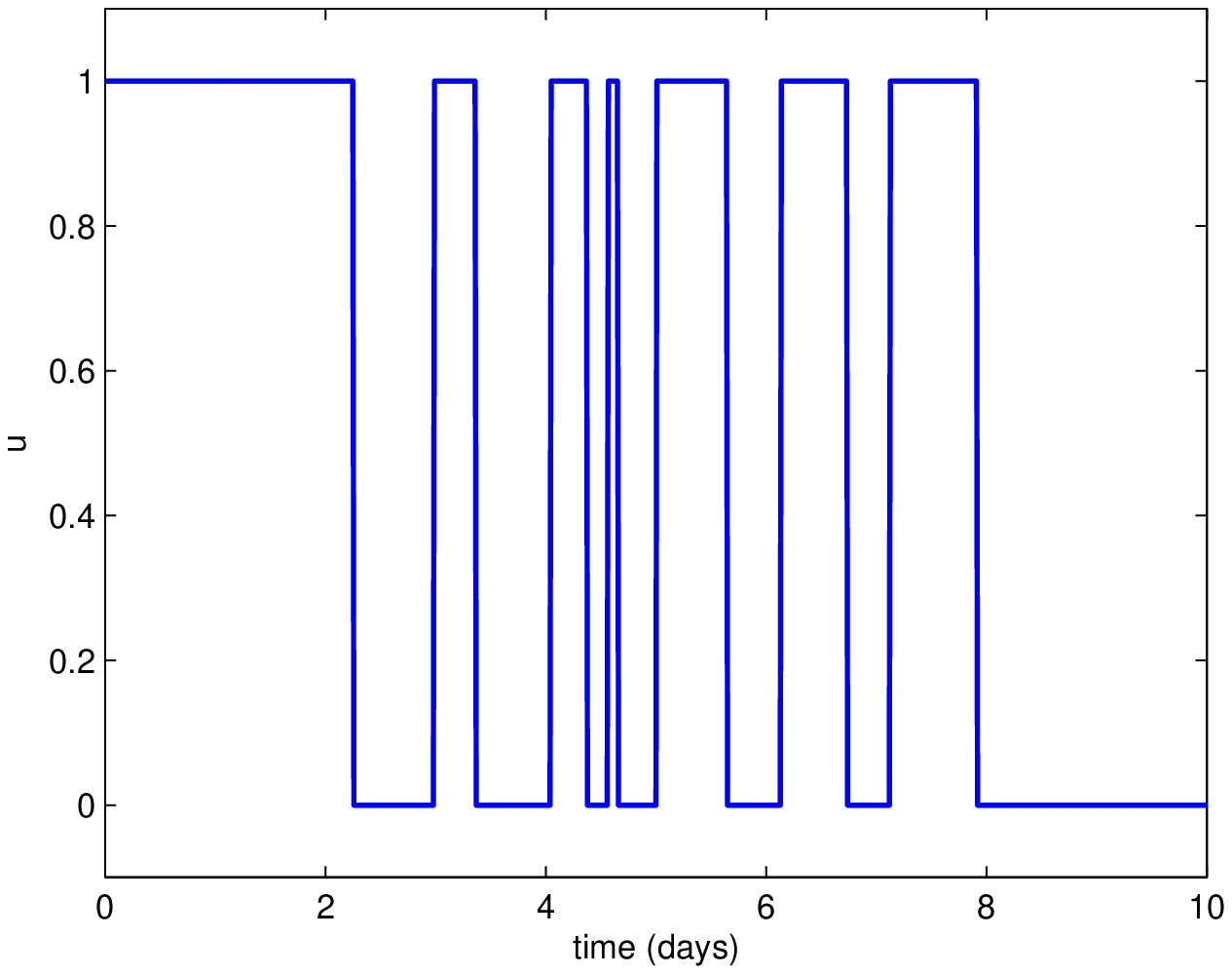}}
\caption{Extremal solutions of the optimal control problem 
for $t \in [0, 10]$ and $\tau = \xi = 0$.}
\label{fig:semdelay}
\end{figure}
% -------------------------------	
Moreover, we note that in \cite{CulshawRuanSpiteri2004} the extremal 
trajectory $z(t)$ is zero at the final time and the objective 
is to maximize $x(t)$ and $z(t)$. Our extremal 
$z(t)$ is always positive and is therefore better
than the one of \cite{CulshawRuanSpiteri2004}: 
compare our Figure~\ref{fig:semdelay} with
Figure~4.1 on page~557 of \cite{CulshawRuanSpiteri2004}.

Let us now consider an optimal control problem with both
incubation and pharmacological time delays. For illustrative
purposes, let $\tau = 0.5$ and $\xi = 0.1$. 
We see from Figure~\ref{fig:comdelay}
that the extremal state variables are similar
to the ones without delay shown in Figure~\ref{fig:semdelay}.
% -----------------------------------------------------------------
\begin{figure}[htb]
\centering
\subfloat[\footnotesize{$x(t)$, $y(t)$ and $z(t)$}]{\label{states:comdelay}
\includegraphics[width=0.45\textwidth]{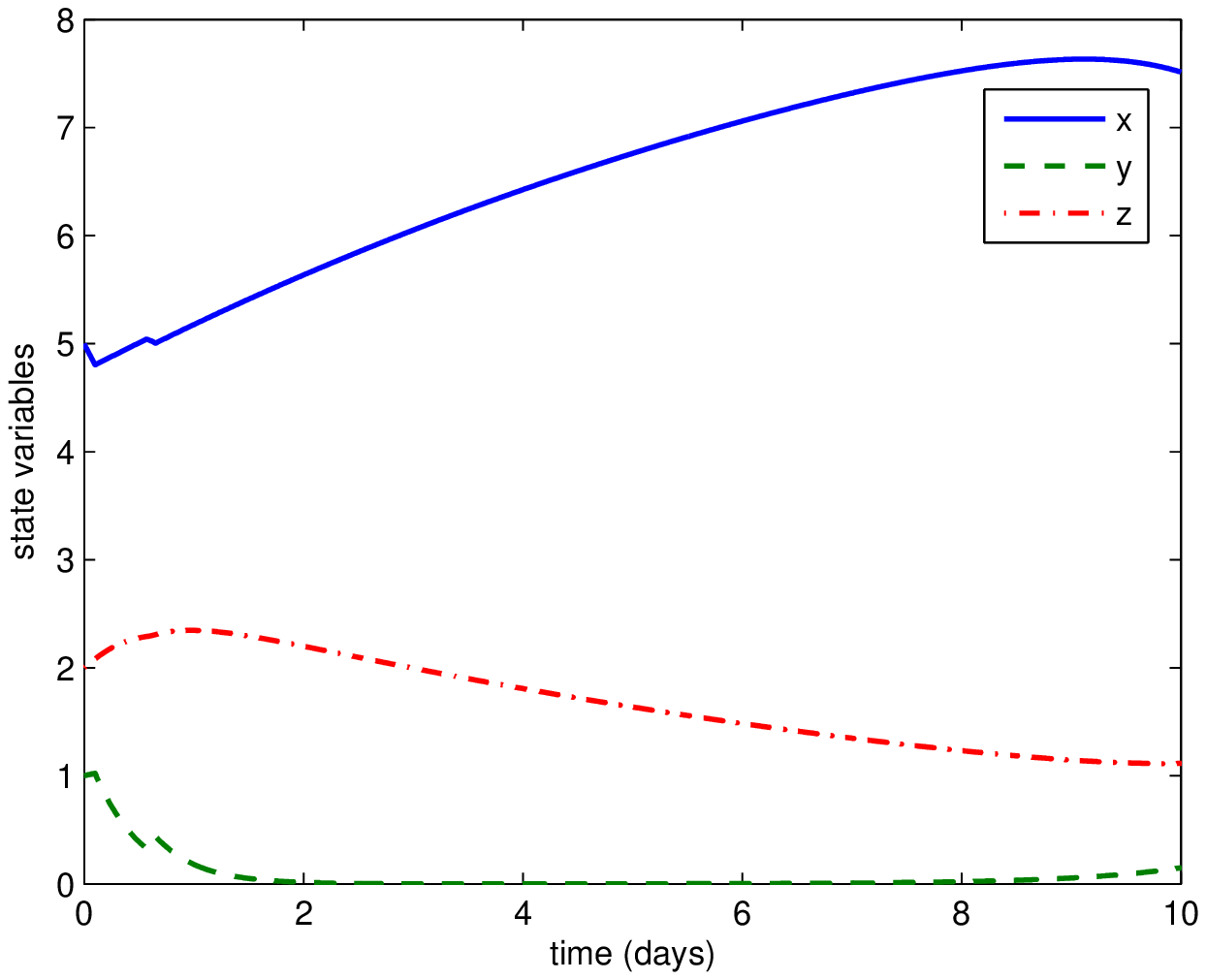}}
\subfloat[\footnotesize{$u(t)$} \eqref{control-law}]{\label{control:delay}
\includegraphics[width=0.45\textwidth]{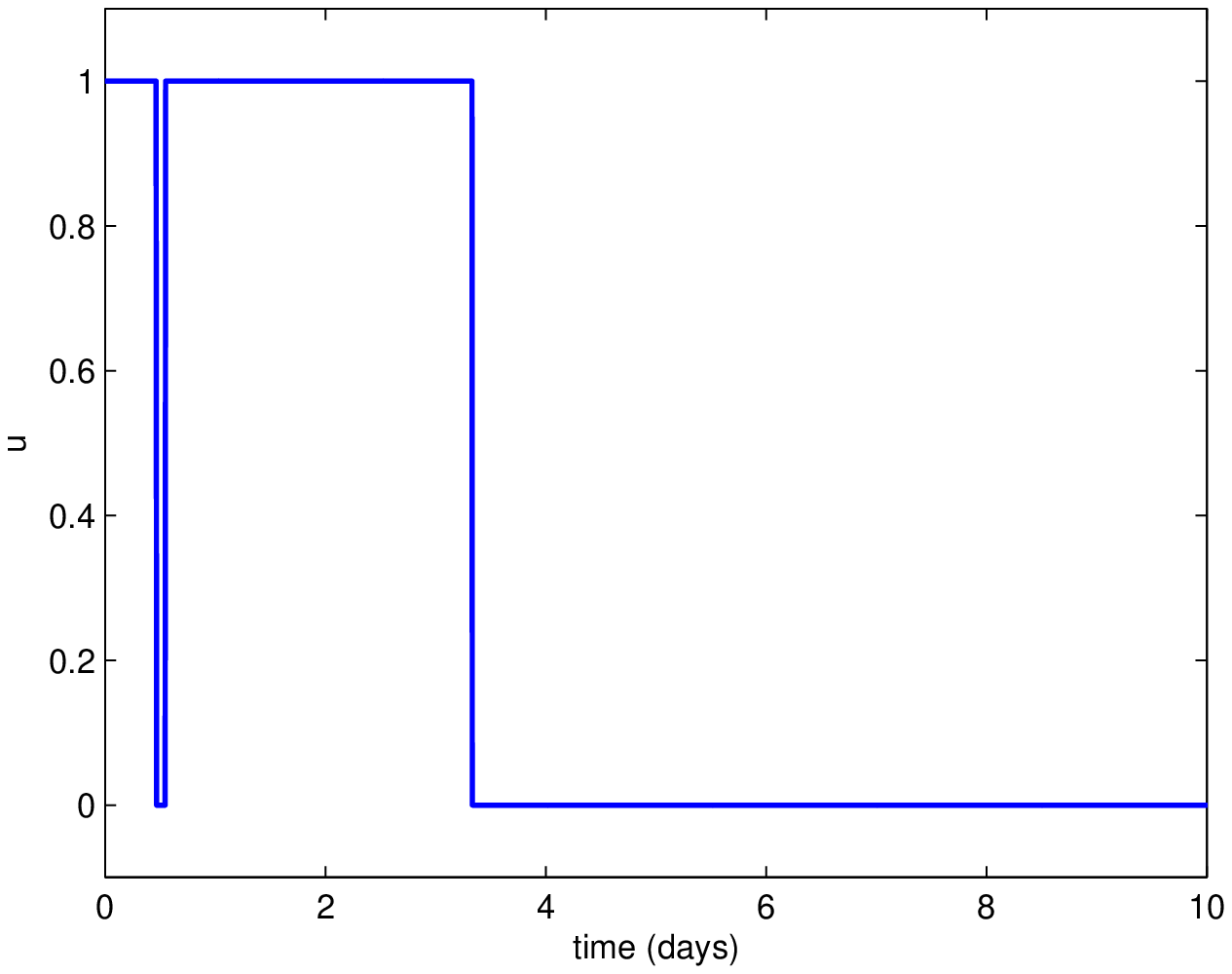}}
\caption{Extremal solutions of the optimal control problem for 
$t \in [0, 10]$, $\tau =0.5$ and $\xi = 0.1$.}
\label{fig:comdelay}
\end{figure}
% -----------------------------------------------------------------
Importantly, the number of switchings of the extremal control 
has decreased, which makes it even more simpler to implement in practice.

% -------------------------------------------

\section{Conclusion}
\label{sec:conc:fw}

We have proposed a new model for the optimal control of HIV
at cell level, which considers not only an intracellular delay 
(delay $\tau$ in the state variables) but also a pharmacological 
delay (delay $\xi$ in the control function). Local stability of the 
equilibria was investigated and the extremal control derived from 
application of the Pontryagin necessary optimality condition 
of G\"ollmann and Maurer \cite[Theorem~3.1]{Goellmann-Maurer-14}.

The extremal control for our optimal control problem, 
with the same values for the parameters as those of 
\cite{CulshawRuanSpiteri2004,WodarzNowak}, is bang-bang, 
that is, it attains alternately the boundary values 0 and 1. 
This type of control is easier to implement, from a medical point of view, 
and leads to better results than the ones previously obtained in 
\cite{CulshawRuanSpiteri2004} for a non-delayed problem with a $L^2$ functional. 

We offer to the community three open questions: (i) how to prove stability 
of the CTL equilibrium \eqref{eq:E2} for an arbitrary 
$\tau > 0$ (see Remark~\ref{rem:nothing;conclc:stab:E2});
(ii) how to prove sufficient conditions of optimality
for our problem with delays in both state and control variables; 
(iii) how to solve our optimal control problem numerically
when one increases $t_f$, $\tau$ and $\xi$. 

% -------------------------------------------

\section*{Acknowledgements}

This research was partially supported by the
Portuguese Foundation for Science and Technology (FCT)
within projects UID/MAT/04106/2013 (CIDMA) 
and PTDC/EEI-AUT/2933/2014 (TOCCATTA),
co-funded by FEDER funds through COMPETE2020 -- 
Programa Operacional Competitividade 
e Internacionaliza\c{c}\~ao (POCI) 
and by national funds (FCT).
Rocha is also supported by the FCT 
Ph.D. fellowship SFRH/BD/107889/2015; 
Silva by the FCT post-doc fellowship 
SFRH/BPD/72061/2010.

% -------------------------------------------

% -------------------------------------------

\end{document}